%

\documentclass[11pt,a4paper]{article}

\usepackage{amssymb, latexsym, amsthm}
\usepackage{graphicx} 
\usepackage{url}      
\newcommand{\doi}[1]{\url{http://dx.doi.org/#1}}
\usepackage{amsmath}  



\newcommand{\p}{\partial}
\newcommand{\D}{\Delta}
\newcommand{\om}{\omega}

\newcommand{\R}{{\mathbb R}}

\newcommand{\spde}{\textsc{spde}}
\newcommand{\swe}{\textsc{swe}}
\newcommand{\she}{\textsc{she}}
\newcommand{\rds}{\textsc{rds}}
\newcommand{\see}{\textsc{see}}

\renewcommand{\mid}{:}
\let\overline\bar

\newtheorem{theorem}{Theorem}
\newtheorem{lemma}[theorem]{Lemma}
\newtheorem{coro}[theorem]{Corollary}
\newtheorem{definition}[theorem]{Definition}
\newtheorem{assume}[theorem]{Condition}
\theoremstyle{remark}
\newtheorem{remark}{Remark}
\topsep=\parskip


\title{Approximation of the random inertial manifold of singularly perturbed stochastic wave equations}

\author{
Yan Lv\thanks{School of Science, Nanjing University of Science \&
Technology, Nanjing, 210094, \textsc{China}
\protect\url{mailto:lvyan1998@yahoo.com.cn} }
\and 
Wei Wang\thanks{Department of Mathematics, Nanjing
University, Nanjing, 210093, \textsc{China}
\protect\url{mailto:wangweinju@yahoo.com.cn}; and School of
Mathematical Sciences, University of Adelaide, South Australia,
\textsc{Australia}}
\and 
A.~J. Roberts\thanks{School of
Mathematical Sciences, University of Adelaide, South Australia,
\textsc{Australia} \protect\url{mailto:anthony.roberts@adelaide.edu.au}}
}

\date{\today}

\begin{document}

\maketitle

\begin{abstract}
By applying Rohlin's result on the classification of homomorphisms of Lebesgue space, the random inertial manifold of a stochastic damped nonlinear wave equations with singular perturbation is proved to be approximated almost surely by that of a stochastic nonlinear heat equation which is driven by a new Wiener process depending on the singular perturbation parameter.  This approximation can be seen as the  Smolukowski--Kramers approximation as time goes to infinity. However, as time goes infinity,  the approximation changes with the small parameter,  which is different from the approximation  on a finite interval.

\end{abstract}

\paragraph{Keywords}  Random inertial manifold, singularly perturbed  stochastic wave equation, Lebesgue space, homomorphism,  

\paragraph{Mathematics Subject Classifications (2000)} 60F10, 60H15,
35Q55.



\section{Introduction}\label{sec:intro}

The motion of particles in a continuum with a stochastic force $\dot{W}$, by Newton's law, is assumed to be described by the following stochastically forced damped wave equation (\swe)~\cite{CF06}
\begin{align}\label{e:SWE1}
&\nu u^{\nu}_{tt}(t,x)+u^{\nu}_{t}(t,x)=\D u^{\nu}+f(u^{\nu}(t,x))+ \sigma W_{t}(t,x),
\quad t>0\,,\  x\in D\,, \\
&u^{\nu}(0,x)=u_{0}\,,\quad u^{\nu}_{t}(0,x)=u_{1}\,,\quad u(t,x)=0\,,\quad x\in \p D\,.\label{e:SWE2}
\end{align}
Here we  consider the problem on a one dimensional bounded spatial domain and for simplicity we assume the domain $D=(0, \pi)$, $W$~is a Wiener process defined on a complete probability space $(\Omega, \mathcal{F}, \{\mathcal{F}_{t}\}_{t\in\R}, \mathbb{P})$ which is determined later.   The small parameter $\nu>0$ characterises the density of particles.   

The  Smolukowski--Kramers approximation of infinite dimension~\cite{CF06} states that on any finite time interval $[0, T]$,  for $0<\nu\ll 1$\,,    the solution $u^{\nu}$ to the \swe~(\ref{e:SWE1})--(\ref{e:SWE2}) is approximated by the solution of the following stochastic nonlinear heat equation (\she)
\begin{eqnarray}\label{e:SHE1}
&&u_{t}(t,x)=\D u(t,x)+f(u(t,x))+\sigma W_{t}(t,x)\,,
\quad x\in D\,,
\\&& u(0,x)=u_{0}\,,\quad u(t,x)=0\,, \quad x\in\p D\,,\label{e:SHE2}
\end{eqnarray}      
in the sense that 
\begin{equation}\label{e:finite-SK}
\lim_{\nu\rightarrow 0}\mathbb{P}\left\{\sup_{0\leq t\leq T}\|u^{\nu}(t)-u(t)\|_{L^{2}(D)}\geq \delta\right\}=0
\end{equation}
for any $\delta>0$\,.  Here we give an almost sure  approximation  for the random dynamics  of  the \swe~(\ref{e:SWE1})--(\ref{e:SWE2}); that is, we consider the approximation of the long time behaviour of~$u^{\nu}$ for small~$\nu$. We call this  the Smolukowski--Kramers approximation  for the \swe~(\ref{e:SWE1})--(\ref{e:SWE2}) as $t\rightarrow\infty$\,.  For this we consider the approximation of random inertial  manifold to \swe~(\ref{e:SWE1}) for small $\nu>0$\,.

%
%
%

Random invariant manifolds are very  important in modelling random dynamics of a stochastic system~\cite[e.g.]{Roberts06k}, especially infinite dimensional systems~\cite[e.g.]{Arn, DuanLuSch03, DuanLuSch04, LuSch07, WangDuan07, Roberts05c}.  For example, Wang and Roberts~\cite{Wang2010a} showed one way to view spatial discretisations of \spde{}s as a stochastic slow invariant manifold.    Duan et al.~\cite{DuanLuSch03, DuanLuSch04} generalized deterministic methods to construct a random invariant manifold for stochastic partial differential equations with multiplicative noise.  Roberts~\cite{Roberts05c} established approximations to stochastic slow invariant manifold models of nonlinear reaction-diffusion \spde{}s. Then some subsequent work constructed random invariant manifolds for a stochastic wave equation~\cite[e.g.]{Liu08, LuSch07}.   We apply  the Lyapunov--Perron method for stochastic partial differential equations~\cite{DuanLuSch04}  to construct a random invariant manifold for the \swe~(\ref{e:SWE1})--(\ref{e:SWE2}) for any fixed $\nu>0$ and a random invariant manifold for the \she~(\ref{e:SHE1})--(\ref{e:SHE2}).   Notice that the noises in systems~(\ref{e:SWE1})--(\ref{e:SWE2}) and~(\ref{e:SHE1})--(\ref{e:SHE2}) are additive: to apply the
Lyapunov--Perron method to \spde{}s with additive noise, we need a stationary solution to transform the \spde{}s to a random differential system~(\ref{e:U-REE})~\cite{DuanLuSch04}.  Then a random invariant manifold  to this stationary solution can be constructed~\cite{DuanLuSch04}.
However,  for a nonlinear stochastic system,  more detailed estimates on solutions is required to ensure the existence of a stationary solution~\cite{PG92, PZ96} and such a stationary solution is difficult to be written out explicitly;  we do this transform by introducing stationary solutions of some linear systems, which are written out explicitly,  see section~\ref{sec:RIM}.  For the \swe~(\ref{e:SWE1})--(\ref{e:SWE2}) we introduce the stationary solution~$z^{*\nu}$ solving the linear system
\begin{equation}\label{e:LSWEs}
\nu z^{\nu}_{tt}+z^{\nu}_{t}=\Delta z^{\nu}+\dot{W}\,.
\end{equation}
and for the \she~(\ref{e:SHE1})--(\ref{e:SHE2}) we introduce $z^{*}$ solving the linear system
\begin{equation}\label{e:eta}
z_{t}=\Delta z+\dot{W}\,.
\end{equation}
Using these stationary processes $z^{*\nu}$ and $z^{*}$,  we transform the \spde{}s to random differential equations and show that this leads to the exact random invariant manifold of the \spde{}s (Theorem~\ref{thm:RIMREE-V}). Such a transformation is frequently invoked in research on \spde{}s~\cite[e.g.]{Liu08, LuSch07}; we verify  rigorously the effectiveness of  this  transformation.

One big  difficulty in  approximating the random invariant manifolds of the \swe~(\ref{e:SWE1})--(\ref{e:SWE2}) by that of the \she~(\ref{e:SHE1})--(\ref{e:SHE2}),  is that  second order derivatives in time of $u^{\nu}$~and~$u$ cannot be treated path-wise in the usual phase space.    The difficulty for~$u^{\nu}$ can be overcame  by  the introduction of~$z^{*\nu}$. However, because $z^{*}_{t}$ cannot be treated as  continuous process, we cannot overcome this difficulty for~$u$  by this transformation.  Fortunately, by Rohlin's classification of homomorphisms on Lebesgue space (Appendix~\ref{sec:Rohlin}), as the distribution of~$z^{*}(\theta_{t}\omega)$ is the same as that of~$z^{*\nu}(\theta_{t}\omega)$ (Appendix~\ref{sec:App-A}), there is a measure preserving mapping $\psi^{\nu}$ (Appendix~\ref{sec:Rohlin}) on the probability space~$(\Omega, \mathcal{F}, \mathbb{P})$  such that 
\begin{equation*}
z^{*}(\psi^{\nu}\theta_{t}\omega)=z^{*\nu}(\theta_{t}\omega).
\end{equation*}
So we can consider $z^{*}(\psi^{\nu}\theta_{t}\omega)$ instead of~$z^{*}(\theta_{t}\omega)$.  Our  result~(Theorem~\ref{thm:main}) on random invariant manifolds implies that the approximate system is 
\begin{equation}\label{e:SHE1}
\tilde{u}^{\nu}_{t}(t,x)=\D \tilde{u}^{\nu}(t,x)+f(\tilde{u}^{\nu}(t,x))+\sigma W^{\nu}_{t}(t,x),
\end{equation}
where the Wiener process $W^{\nu}(t,x)=\psi^{\nu}W(t,x)$.  
This approximating result also  shows that,  different from the Smolukowski--Kramers approximation on finite time interval,  as $t\rightarrow\infty$\,,  for  small $\nu>0$  and almost all $\omega\in\Omega$\,,  the solution $u^{\nu}(t,x, \omega)$ to \textsc{swe}~(\ref{e:SWE1}) is approximated by~$u(t,x, \psi^{\nu}\omega)$, the solution to \textsc{she}~(\ref{e:SHE1}) on the $\psi^{\nu}\omega$~path.    Such transitions of the random parameter~$\omega$ also appears in approximations of the random invariant manifold for slow-fast stochastic system~\cite{WR09}.  However, the transition of~$\psi^{\nu}$ here is difficult to be defined explicitly.  This is left for future research. 

Similar to the analysis of deterministic wave equations~\cite{ChowLu89}, we here introduce the change of variables~(\ref{e:u-v-nu}) and a new inner product on the phase space (section~\ref{sec:RIM-SWE}).  Because of this change of variables,  we restrict the nonlinearity to satisfy  $f(0)=0$\,, which was also needed for the analysis of deterministic wave equations~\cite{ChowLu89}.  Our results generalise the deterministic results~\cite{ChowLu89}.

There has been some research on the approximation of the \swe~(\ref{e:SWE1})--(\ref{e:SWE2}) as $\nu\rightarrow 0$ on finite time intervals~\cite{CF06, CF07}. But there has been little research on the approximation of the long time behaviour  of the \swe~(\ref{e:SWE1})--(\ref{e:SWE2}). However, recent research gave an approximation for  the  long time behaviour in an almost sure sense~\cite{LvWang08} and distribution~\cite{LvWang10} in the special case $\sigma=\sqrt{\nu}$\,.

\section{Preliminary}

Denote by $L^{2}(D)$ the set of square integrable functions on~$(0, \pi)$, and denote by~$\langle \cdot, \cdot\rangle$ the usual inner product, $\|\cdot\|$~the norm on~$L^{2}(D)$.  We also denote by $H_{0}^{1}(D)$ the usual Sobolev space $W^{1, 2}_{0}(D)$~\cite{Adam}.

Let  $A=\D$ with zero Dirichlet boundary condition on $(0, \pi)$. Then the operator~$A$  generates  a strongly continuous semigroup~$e^{At}$, $t\geq 0$\,, on~$L^{2}(D)$. Denote  the eigenvalues of~$-A$ by  $\lambda_{k}=k^{2}$,  $k=1,2, \dots$\,,  and the corresponding  eigenfunctions~$\{e_{k}\}$ which forms a standard orthogonal basis in~$L^{2}(D)$. The nonlinearity $f: \R\rightarrow \R$ is Lipschitz continuous with Lipschitz constant~$L_{f}$, and then   there is a constant $C>0$ such that 
\begin{equation}
|f(\xi)|\leq C(1+|\xi|)\quad \text{for any }\quad \xi\in\R\,. 
\end{equation}
Furthermore, we assume 
\begin{equation}\label{e:Lip}
L_{F}\leq \sqrt{\lambda_{1}}\,.
\end{equation}
The above condition ensures the existence of a unique stationary solution to stochastic wave equations~(\ref{e:SWE1})--(\ref{e:SWE2})~\cite{DaPrato}.

The Wiener process~$\{W(t,x)\}_{t\in\R}$ is assumed to be a two sided, $L^{2}(D)$-valued, $Q$-Wiener process with covariance operator~$Q$ satisfying 
\begin{equation}\label{e:Q}
\operatorname{Tr} Q<\infty.
\end{equation} 
For our purpose we assume the probability space~$(\Omega, \mathcal{F}, \{\mathcal{F}_{t}\}_{t}, \mathbb{P})$ be the canonical probability space with Wiener measure~$\mathbb{P}$~\cite{Arn}. To be more
precise, $W$~is the identity on~$\Omega$, with
\begin{equation*}
\Omega=\left\{w\in C(\R, L^{2}(D)): w(0)=0 \right\}.
\end{equation*}
Let~$\theta_{t}: (\Omega, \mathcal{F}, \{\mathcal{F}_{t}\}_{t\in\R},\mathbb{P})\rightarrow (\Omega, \mathcal{F}, \{\mathcal{F}_{t}\}_{t\in\R},\mathbb{P}) $ be a
metric dynamical system (driven system), that is,
\begin{itemize}
    \item $\theta_0=\text{id}$,
    \item $\theta_t\theta_s=\theta_{t+s}$ for all~$s$,
        $t\in\mathbb{R}$\,,
    \item the map $(t,\omega)\mapsto \theta_t\omega$ is
     measurable and $\theta_t\mathbb{P}=\mathbb{P}$ for all
        $t\in\mathbb{R}$\,.
\end{itemize}
On this canonical probability space~$\Omega$, we choose~$\theta_t$ to be the Wiener shift~\cite{Arn}
\begin{equation}\label{MD}
\theta_t\omega(\cdot)=\omega(\cdot+t)-\omega(t),\quad t\in
\mathbb{R}\,, \quad\omega\in\Omega_{0}\,,
\end{equation}
which preserves the Wiener measure~$\mathbb{P}$ on~$\Omega$\,. Furthermore, $\theta_{t}$~is ergodic under Wiener measure~$\mathbb{P}$. Write~$W(t, x)$ as~$W(t, x, \omega)$ to show the dependence on $\omega\in\Omega$\,, then 
\begin{equation*}
W(\cdot, x, \theta_{t}\omega)=W(\cdot+t, x,  \omega)-W(t, x, \omega).
\end{equation*}
In this view, the stochastic wave equation~(\ref{e:SWE1})--(\ref{e:SWE2}) is driven by~$\theta_{t}$.

\section{Random invariant manifold for stochastic evolutionary equation}\label{sec:RIM}
Random invariant manifold theory for stochastic evolutionary equations~(\textsc{see}s) has been developed in lots of research~\cite[e.g.]{BW10, Cara, DuanLuSch03, DuanLuSch04, Sch11, Arn}. Here  we just recall some basic concepts and results.

Let $H$ be a separable Hilbert space with norm~$\|\cdot\|_H$ and inner
product~$\langle\cdot,\cdot\rangle_H$. We consider a stochastic process~$\{\varphi(t)\}_{t\geq0}$ defined on the probability space~$(\Omega, \mathcal{F}, \{\mathcal{F}_{t}\}_{t}, \mathbb{P})$
\begin{definition}
A stochastic process~$\{\varphi(t)\}_{t\geq0}$ is called a
random dynamical system~(\textsc{rds}) over metric dynamical system~$(\Omega, \mathcal{F}, \{\mathcal{F}_{t}\}_{t}, 
\mathbb{P},\{\theta_t\}_{t})$ if $\varphi$ is
$(\mathcal{B}[0,\infty)\times\mathcal{F}\times\mathcal{B}(H),\mathcal{B}(H))$-measurable
\begin{eqnarray*}
   \varphi: \R^+\times\Omega\times H &\rightarrow& H\\
     (t,\omega,x)&\mapsto& \varphi(t,\omega,x)
\end{eqnarray*}
 and for almost all  $\omega\in \Omega$
\begin{itemize}
    \item $\varphi(0,\omega)=id$ (on $H$);
    \item $\varphi(t+s,\omega,x)=\varphi(t,\theta_s\omega,\varphi(s, \omega,x))$
   for all $t,s \in\R^+$, $x\in H$
     (cocycle property).
\end{itemize}
\end{definition}
If $\varphi(t,\omega, \cdot): H \rightarrow H$ is continuous, $\{\varphi(t)\}_{t\geq 0}$ is called a continuous \textsc{rds}.

\begin{definition} 
A random set~$M(\om)$ is called invariant for \textsc{rds}~$\varphi$ if
\begin{eqnarray*}
\varphi(t,\om, M(\om))\subset M(\theta_t \om),\quad\text{for any } t \geq 0\,.
\end{eqnarray*}
If an invariant set~$M(\om)$ is represented by a Lipschitz or~$C^k$ mapping $h(\cdot,\om) : H_{1} \rightarrow H_{2}$
with $H=H_{1}\oplus H_{2}$
 such that
$M(\om) = \{\xi + h(\xi, \om)\mid \xi\in H_{1}\}$,
then we call~$M(\om)$ a  Lipschitz  or~$C^k$  invariant
manifold. Furthermore, if $H_{1}$~is finite dimensional and
$M(\om)$~attracts exponentially all the orbits of~$\varphi$, then we
call~$M(\om)$ a  random stochastic inertial manifold of~$\varphi$.
\end{definition}

For  our purpose we consider the \textsc{rds} defined by the following abstract evolutionary equation with additive noise
\begin{eqnarray}\label{e:u-SEE}
u_t=Au+F(u)+\sigma\dot{W}\,,\quad u(0)=u_{0}\in H\,.
\end{eqnarray}
Here $F: H\rightarrow H$ is globally Lipschitz continuous with Lipschitz
constant~$L_{F}$, $W$~is an $H$~valued Wiener process with trace class covariation operator on~$H$.   Furthermore 
$A: D(A) \subset H\rightarrow H$ is a linear operator which
generates a strongly continuous semigroup~$\{e^{At}\}_{t\geq 0}$ on~$H$, which can be extended to a group $\{e^{At}\}_{t\in\R}$ on~$H$\,. We assume the  following  exponential dichotomy. 
\begin{assume} \label{ass:3}
With exponents $ \beta<\alpha<0$\,, and bound $K> 0$, 
there exists a continuous
projection~$P$ on~$H$ such that
\begin{enumerate}
  \item $Pe^{At} = e^{At}P$, \; $t\in \R$.
  \item the restriction $e^{At}|_{R(P)}, t \geq 0$, is an isomorphism
   of the range~$R(P)$ of~$P$ onto itself, and we denote~$e^{At}$ for
    $t < 0$ the inverse map;
  \item \begin{itemize}
\item $\|e^{At}Px\|_{H} \leq Ke^{\alpha t}\|x\|_{H}$, $t \leq 0$\,, and \item
$\|e^{At}(I-P)x\|_{H}\leq Ke^{\beta t}\|x\|_{H}$, $t \geq 0$\,.
\end{itemize}
\end{enumerate}
\end{assume}
By the assumption we have the following theorem. 
\begin{theorem}
Assume Condition~\ref{ass:3}, then the \see~(\ref{e:u-SEE}) has a unique stationary solution~$u^{*}(t,\omega)=u^{*}(\theta_{t}\omega)$.  
\end{theorem}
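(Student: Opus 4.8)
The plan is to realise the stationary solution as the unique tempered mild solution of \eqref{e:u-SEE}, produced by a Lyapunov--Perron fixed point after removing the additive noise. First I would bring in the stationary Ornstein--Uhlenbeck process $z^{*}(\theta_{t}\omega)$ attached to the linear part, i.e.\ the unique stationary solution of $dz=Az\,dt+\sigma\,dW$: under $\operatorname{Tr}Q<\infty$ together with the exponential bounds of Condition~\ref{ass:3} this process exists, is $\theta$-equivariant, and is given, in the splitting $H=R(P)\oplus R(I-P)$, by stochastic convolutions over half-lines whose integrands decay exponentially and which therefore converge in $L^{2}(\Omega;H)$ (cf.~\cite{DuanLuSch04}). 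Substituting $u=v+z^{*}(\theta_{t}\omega)$ transforms \eqref{e:u-SEE} into the random (no longer stochastic) evolution equation $v_{t}=Av+F\big(v+z^{*}(\theta_{t}\omega)\big)$, so the task reduces to constructing, for almost every fixed $\omega$, a unique solution of this pathwise equation with controlled growth in~$t$.

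For fixed $\omega$ I would seek $v$ in a Banach space of continuous $H$-valued functions on~$\R$ with norm $\|v\|_{\gamma}=\sup_{t\in\R}e^{-\gamma|t|}\|v(t)\|_{H}$ for a sufficiently small rate $\gamma>0$; this space is still large enough to contain the tempered orbit $t\mapsto z^{*}(\theta_{t}\omega)$. A tempered solution of the $v$-equation must satisfy the Lyapunov--Perron integral identity imposed by the dichotomy, namely $v(t)=\int_{-\infty}^{t}e^{A(t-s)}(I-P)F\big(v(s)+z^{*}(\theta_{s}\omega)\big)\,ds$ plus the $R(P)$-contribution propagated over the complementary half-line, and conversely every fixed point of the associated integral operator $\mathcal{T}_{\omega}$ solves the equation. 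Using the linear growth $|F(\xi)|\le C(1+|\xi|)$ one checks that $\mathcal{T}_{\omega}$ maps this space into itself, and the Lipschitz bound $L_{F}$ combined with the exponential estimates of Condition~\ref{ass:3} gives a Lipschitz constant for $\mathcal{T}_{\omega}$ of the form $c(K,\alpha,\beta)\,L_{F}$; since the concrete wave and heat settings carry the smallness $L_{F}\le\sqrt{\lambda_{1}}$ of \eqref{e:Lip} together with a spectral gap, this constant is $<1$, so $\mathcal{T}_{\omega}$ has a unique fixed point $v^{*}(\cdot,\omega)$. Then $u^{*}:=v^{*}+z^{*}$ is a stationary solution, and it is the only one, because any stationary solution becomes, after subtracting $z^{*}$, a tempered solution of the $v$-equation hence a fixed point of $\mathcal{T}_{\omega}$; the identity $u^{*}(t,\omega)=u^{*}(\theta_{t}\omega)$ follows from uniqueness of the fixed point and the covariance relation $\mathcal{T}_{\theta_{r}\omega}=S_{r}\circ\mathcal{T}_{\omega}\circ S_{-r}$, where $S_{r}$ is the time shift $v(\cdot)\mapsto v(\cdot+r)$.

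I expect the main obstacle to be twofold and essentially technical. First, one must verify rigorously that the half-line stochastic convolutions defining $z^{*}$ are well-defined, stationary and tempered along the $\theta$-orbit, so that the pathwise fixed point argument can be carried out off a single $\mathbb{P}$-null set. Second, one must make explicit the smallness condition on $L_{F}$ relative to $K$ and the dichotomy gap $\alpha-\beta$ under which $\mathcal{T}_{\omega}$ contracts, and check that it is implied by the hypotheses actually in force; in the applications this is precisely the role played by $L_{F}\le\sqrt{\lambda_{1}}$ in \eqref{e:Lip}. Once these two points are in place, the remaining work is routine semigroup and Gronwall-type estimates.
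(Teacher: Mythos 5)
First, a point of comparison: the paper does not prove this theorem at all --- it is asserted and backed by citations (\cite{PG92, PZ96, Hairer}, with \cite{Gaan04} invoked in the accompanying Remark for the case $\alpha>0$), so there is no in-paper argument to measure you against. Your overall strategy --- subtract the stationary Ornstein--Uhlenbeck process $z^{*}(\theta_{t}\omega)$, realise the stationary solution as the unique fixed point of an integral operator on a space of tempered full-line trajectories, and deduce $u^{*}(t,\omega)=u^{*}(\theta_{t}\omega)$ from uniqueness of the fixed point together with the shift-equivariance $\mathcal{T}_{\theta_{r}\omega}=S_{r}\circ\mathcal{T}_{\omega}\circ S_{-r}$ --- is the standard route in those references and is the right family of argument.

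There is, however, a concrete error in the integral identity you impose, and it stems from the sign of $\alpha$. Condition~\ref{ass:3} assumes $\beta<\alpha<0$, and on $R(P)$ the only stated estimate is $\|e^{At}Px\|_{H}\le Ke^{\alpha t}\|x\|_{H}$ for $t\le 0$; for $s\ge t$ this reads $\|e^{A(t-s)}P\|\le Ke^{-\alpha(s-t)}$ with $-\alpha>0$, i.e.\ the bound \emph{grows} exponentially in $s$. Consequently the ``$R(P)$-contribution propagated over the complementary half-line'', namely $-\int_{t}^{\infty}e^{A(t-s)}PF\bigl(v(s)+z^{*}(\theta_{s}\omega)\bigr)\,ds$, diverges for every $v$ in your weighted space (the integrand is of order $e^{(-\alpha+\gamma)(s-t)}$), so the operator $\mathcal{T}_{\omega}$ you define is not well posed. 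That ansatz belongs to the genuinely hyperbolic case $\beta<0<\alpha$, which is precisely the situation the paper relegates to its Remark and to \cite{Gaan04}. In the regime $\beta<\alpha<0$ of Condition~\ref{ass:3} the correct fixed-point equation carries no dichotomy splitting at all: one solves $v(t)=\int_{-\infty}^{t}e^{A(t-s)}F\bigl(v(s)+z^{*}(\theta_{s}\omega)\bigr)\,ds$, equivalently $u^{*}(\omega)=\int_{-\infty}^{0}e^{-As}F(u^{*}(\theta_{s}\omega))\,ds+\sigma\int_{-\infty}^{0}e^{-As}\,dW(s)$, and the contraction constant is of order $KL_{F}$ divided by the forward decay rate of the full semigroup, which is where the smallness (\ref{e:Lip}) enters. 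Two secondary remarks: (i) this corrected argument needs forward exponential decay of $e^{At}$ on $R(P)$ as well, which holds in the paper's concrete applications but is not literally contained in Condition~\ref{ass:3} as stated; (ii) your uniqueness step silently assumes that an arbitrary stationary solution has tempered orbits so that it lands in the fixed-point space --- this is true but should be justified (ergodicity of $\theta_{t}$ plus stationarity), since otherwise uniqueness is only proved within the tempered class.
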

\begin{remark}
There has been lots of research on the existence and uniqueness of stationary solution to stochastic evolutionary equations~\cite[e.g.]{PG92, PZ96,   Hairer}. Here the assumption on $\alpha<0$ is not essential; for $\alpha>0$ the above theorem also holds provided the Lipschitz constant~$L_{F}$ is small enough~\cite{Gaan04}.
\end{remark}

Suppose $u^{*}(t,\omega)=u^{*}(\theta_{t}\omega)$ is a stationary solution of the \see~(\ref{e:u-SEE}). We construct a random invariant manifold to the stationary solution~$u^{*}$.  To do this  we transform the \see~(\ref{e:u-SEE}) to a random dynamical system~\cite{DuanLuSch04}.  Define $U=u-u^{*}$, then 
\begin{eqnarray}\label{e:U-REE}
U_t=AU+F(u)-F(u^{*}(\theta_t\omega)), \quad U(0)=U_{0}\in H\,.
\end{eqnarray}
Notice the above system has a unique stationary solution $U= 0$\,.  
For any $U_{0}\in H$\,, there is a unique solution~$\Phi(t,\omega)U_{0}$ to equation~(\ref{e:U-REE}) and $\{\Phi(t,\omega)\}_{t\geq 0}$ defines a continuous random dynamical system  on~$H$~\cite[e.g.]{DuanLuSch04}. Then by the Lyapunov--Perron method for random evolutionary equations~\cite{DuanLuSch04},  we have the following theorem. 

\begin{theorem}\label{thm:RIMSEE}
   Choose  $\eta<0$ such that  spectral  gap condition 
\begin{eqnarray*}
KL_{F}\left( \frac{1}{\alpha-\eta} +\frac{1}{\eta-\beta}
\right)<1\,,
\end{eqnarray*}
holds,  then there exists a Lipschitz random  invariant manifold  for \textsc{see}~(\ref{e:u-SEE}), which is given by
\begin{eqnarray*}
M(\om) = \{(\xi,  h(\xi,\om))+u^{*}(\omega): \xi\in PH\},
\end{eqnarray*}
where $h : PH \rightarrow QH$ is a Lipschitz continuous mapping with Lipschitz constant~$L_{h}$ and
$h(0)=0$\,. Moreover, if 
\begin{eqnarray}\label{e:gapcondition}
KL_{F}\left( \frac{1}{\alpha-\eta} +\frac{1}{\eta-\beta}
\right)+K^2 L_{h} L_{F}\frac{1}{\alpha-\eta}<1\,,
\end{eqnarray}
then $M(\om)$ is  a random inertial manifold for the \see~(\ref{e:u-SEE}).  Furthermore, if $F\in C^{1}(H, H)$, then the random invariant manifold is also~$C^{1}$, that is, $h\in C^{1}(PH, QH)$.  
 \end{theorem}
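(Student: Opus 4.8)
The plan is to carry out the classical Lyapunov--Perron construction on the transformed random equation~(\ref{e:U-REE}), in the spirit of~\cite{DuanLuSch04}. Write $G(U,\omega):=F(U+u^{*}(\omega))-F(u^{*}(\omega))$, so that $G(\cdot,\omega)$ is Lipschitz with constant $L_{F}$ and $G(0,\omega)=0$, and~(\ref{e:U-REE}) reads $U_{t}=AU+G(U,\theta_{t}\omega)$. Fix $\eta$ with $\beta<\eta<\alpha<0$ satisfying the spectral gap inequality, put $Q:=I-P$, and introduce the Banach space
\[
C_{\eta}^{-}:=\Big\{\phi\in C\big((-\infty,0],H\big):\ \|\phi\|_{C_{\eta}^{-}}:=\sup_{t\le 0}e^{-\eta t}\|\phi(t)\|_{H}<\infty\Big\}.
\]
First I would show that a mild solution $U$ of~(\ref{e:U-REE}) on $(-\infty,0]$ belongs to $C_{\eta}^{-}$ if and only if there is some $\xi\in PH$ with
\[
U(t)=e^{At}\xi+\int_{0}^{t}e^{A(t-\tau)}PG(U(\tau),\theta_{\tau}\omega)\,d\tau+\int_{-\infty}^{t}e^{A(t-\tau)}QG(U(\tau),\theta_{\tau}\omega)\,d\tau=:\big(\mathcal{T}(\xi,\omega)U\big)(t),
\]
the backward integral converging because of the $e^{\beta(t-\tau)}$ bound in Condition~\ref{ass:3} together with the at-most-$\eta$ growth of $\phi$ as $\tau\to-\infty$.

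Next I would estimate, using the two dichotomy bounds exactly as above, $\|\mathcal{T}(\xi,\omega)U_{1}-\mathcal{T}(\xi,\omega)U_{2}\|_{C_{\eta}^{-}}\le KL_{F}\big(\tfrac{1}{\alpha-\eta}+\tfrac{1}{\eta-\beta}\big)\|U_{1}-U_{2}\|_{C_{\eta}^{-}}$; the gap condition makes $\mathcal{T}(\xi,\omega)$ a uniform contraction, with a unique fixed point $U^{*}(\cdot;\xi,\omega)\in C_{\eta}^{-}$ depending measurably on $\omega$ (iterate from $0$) and Lipschitz in $\xi$ with constant $K/(1-KL_{F}(\tfrac{1}{\alpha-\eta}+\tfrac{1}{\eta-\beta}))$. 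Setting
\[
h(\xi,\omega):=\int_{-\infty}^{0}e^{-A\tau}QG\big(U^{*}(\tau;\xi,\omega),\theta_{\tau}\omega\big)\,d\tau\in QH,
\]
one has $U^{*}(0;\xi,\omega)=\xi+h(\xi,\omega)$; the Lipschitz estimate on $U^{*}$ gives $h(\cdot,\omega)$ Lipschitz with a constant $L_{h}$ controlled by $K,L_{F}$ and the gap, and $h(0,\omega)=0$ since $U^{*}\equiv 0$ when $\xi=0$. Translating back by $u^{*}$ yields the stated set $M(\omega)$. Invariance $\Phi(t,\omega)M(\omega)\subset M(\theta_{t}\omega)$ then follows in the usual way: by the cocycle property and uniqueness of the fixed point, an orbit starting on $M(\omega)$ again decays at rate $\eta$ backward from any later time, hence lies on $M(\theta_{t}\omega)$.

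For the inertial-manifold assertion I would run the complementary forward-in-time Lyapunov--Perron argument proving asymptotic completeness: given $u_{0}\in H$, construct $\xi\in PH$ so that the orbit of $\xi+h(\xi,\omega)+u^{*}(\omega)$ tracks that of $u_{0}$ and the difference decays like $e^{\eta t}$. The strengthened gap inequality~(\ref{e:gapcondition}), whose extra term $K^{2}L_{h}L_{F}/(\alpha-\eta)$ is precisely what closes the contraction once the unknown is constrained to the graph of $h$, supplies this exponential attraction; the finite dimensionality required for calling $M(\omega)$ an inertial manifold is available because $\lambda_{k}=k^{2}\to\infty$ allows $P$ to be taken as a spectral projection onto finitely many modes. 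Finally, for $F\in C^{1}(H,H)$ I would differentiate the fixed-point relation formally in $\xi$, note that the candidate derivative solves a linear fixed-point equation governed by the same contraction constant, and invoke the uniform contraction (fiber-contraction) principle to justify the differentiation and conclude $h(\cdot,\omega)\in C^{1}$. The main obstacle is this smoothness step: the parameter dependence must be carried through carefully enough that the contraction is genuinely $C^{1}$ in $(\xi,\omega)$ in the appropriate fibered sense, and it is the $C^{1}$-dependence on the random fiber, rather than the now-routine contraction bookkeeping, where the real work lies.
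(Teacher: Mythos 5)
Your proposal is correct and follows essentially the same route as the paper, which states this theorem by appeal to the Lyapunov--Perron method of Duan, Lu and Schmalfu\ss\ and records exactly your fixed-point equation as~(\ref{e:U-bar}) in the space~$C^{-}_{\eta,H}$ (the only cosmetic difference being that the paper parameterises the graph by the $P$-component of $\bar u(0)$ rather than of $U(0)$, hence the $e^{At}(\xi-Pu^{*})$ term). The contraction estimate $KL_{F}\bigl(\tfrac{1}{\alpha-\eta}+\tfrac{1}{\eta-\beta}\bigr)<1$, the definition of $h$, the invariance argument, the role of the strengthened gap condition~(\ref{e:gapcondition}) for exponential tracking, and the fiber-contraction argument for $C^{1}$ smoothness all match the cited construction.
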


 The Lyapunov--Perron method gives an expression of $h: PH\rightarrow QH$ as $h(\xi, \omega)=Q\bar{U}(0,\xi)$ for $\xi\in PH$ with $\bar{U}$~being the unique solution of the following integral equation 
\begin{eqnarray}
\bar{U}(t,\xi)&=&e^{At}(\xi-Pu^{*})+\int_{0}^{t}e^{A(t-s)}P[F(\bar{u}(s))-F(u^{*}(s))]\,ds
\nonumber\\&&{}
+\int_{-\infty}^{t}e^{A(t-s)}Q[F(\bar{u}(s))-F(u^{*}(s))]\,ds\label{e:U-bar}
\end{eqnarray}
in the Banach space 
\begin{eqnarray}\label{e:C-space}
C_{\eta, H}^{-}=\Big\{u \in C((-\infty, 0]; H) : \;   \sup_{t\leq 0}e^{-\eta t}\|u(t)\|<\infty\Big\}
\end{eqnarray}
with norm 
\begin{equation*}
|u|_{C_{\eta, H}^{-}}=\sup_{t\leq 0}e^{-\eta t}\|u(t)\|\,.
\end{equation*}

However, directly constructing an explicit expression to a stationary solution for a nonlinear \spde{} is very difficult. 
So we use another transformation.
Define the stationary process $z^{*}(t,\omega)=z^{*}(\theta_{t}\omega)$ that solves the linear \spde{} 
\begin{equation}
z_{t}=Az+\dot{W}\,.
\end{equation} 
Then $z^{*}(\omega)=\int_{-\infty}^{0}e^{-As}dW(s,\omega)$ and  
\begin{equation}\label{e:z}
z^{*}(t,\omega)=e^{At}\int_{-\infty}^{0}e^{-As}dW(s)+\int_{0}^{t}e^{A(t-s)}dW(s).
\end{equation}
Introduce $V=u-z^{*}$, then 
\begin{eqnarray}\label{e:V-REE}
V_t=AV+F(V+z^{*}(\theta_t\omega)), \quad V(0)=V_{0}\in H\,.
\end{eqnarray}
Then $V^{*}=u^{*}-z^{*}$ is the unique stationary solution to equation~(\ref{e:V-REE}).
Similarly for any $V_{0}\in H$\,, there is a unique solution~$\Psi(t,\omega)V_{0}$ to equation~(\ref{e:V-REE})  and $\{\Psi(t,\omega)\}_{t\geq 0}$~defines a continuous random dynamical systems on~$H$.  By the Lyapunov--Perron method~\cite{DuanLuSch04}, we also have a random invariant manifold~$\widetilde{M}(\omega)$, and then $\widetilde{M}(\omega)+z^{*}(\omega)$ is a random invariant manifold for the \textsc{see}~(\ref{e:u-SEE}). The following theorem establishes that this random invariant manifold coincides with~$ M(\omega)$ in  Theorem~\ref{thm:RIMSEE}.  \begin{theorem}\label{thm:RIMREE-V}
\begin{equation*}
 M(\omega)=\widetilde{M}(\omega)+z^{*}(\omega).
\end{equation*}
\end{theorem}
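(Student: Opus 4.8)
The plan is to show that the two random invariant manifolds $M(\omega)$ and $\widetilde M(\omega)+z^*(\omega)$ coincide by identifying the corresponding Lyapunov--Perron fixed-point equations under the substitution that relates the two transformed systems. First I would record that both manifolds are graphs over $PH$ anchored at the same base point: $M(\omega)=\{\xi+h(\xi,\omega)+u^*(\omega):\xi\in PH\}$ comes from the integral equation~\eqref{e:U-bar} for $\bar U$ in $C^-_{\eta,H}$, while $\widetilde M(\omega)$ is produced by the analogous Lyapunov--Perron equation for the system~\eqref{e:V-REE}, namely a fixed point $\bar V$ of
\begin{equation*}
\bar V(t,\xi)=e^{At}(\xi-Pz^*)+\int_0^t e^{A(t-s)}P F(\bar V(s)+z^*(s))\,ds+\int_{-\infty}^t e^{A(t-s)}Q F(\bar V(s)+z^*(s))\,ds,
\end{equation*}
in the same space $C^-_{\eta,H}$, with $\widetilde h(\xi,\omega)=Q\bar V(0,\xi)$. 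The claim $M(\omega)=\widetilde M(\omega)+z^*(\omega)$ is equivalent to the statement that for every $\xi\in PH$ the orbit on $M$ through the corresponding point equals the orbit on $\widetilde M$ shifted by $z^*$.

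The key step is the change of unknown $\bar U = \bar V + z^* - u^*$, or more transparently $\bar u = \bar V + z^*$, where $\bar u$ is the reconstructed solution on the manifold. Since $z^*$ solves the linear equation $z_t=Az+\dot W$ with the stationary representation~\eqref{e:z}, one checks that $z^*(t)=e^{At}z^* + \int_0^t e^{A(t-s)}\,dW(s)$, and the same variation-of-constants identity holds for $u^*$ relative to the nonlinear drift. Substituting $\bar U(t)=\bar u(t)-u^*(t)$ into~\eqref{e:U-bar} and using $F(\bar u)-F(u^*)$, versus substituting $\bar V(t)=\bar u(t)-z^*(t)$ into the equation above and using $F(\bar V+z^*)=F(\bar u)$, I would verify that the two integral equations for $\bar u$ are literally the same equation; the stochastic-convolution terms coming from $z^*$ and $u^*$ cancel against the corresponding $e^{At}z^*$, $e^{At}Pz^*$ terms, so that both $\bar u$ satisfy
\begin{equation*}
\bar u(t)=e^{At}\xi+\int_0^t e^{A(t-s)}P F(\bar u(s))\,ds+\int_{-\infty}^t e^{A(t-s)}Q F(\bar u(s))\,ds - \big(\text{stationary correction}\big),
\end{equation*}
with the correction term identical in both derivations. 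By uniqueness of the fixed point in $C^-_{\eta,H}$ (which holds under the spectral gap condition of Theorem~\ref{thm:RIMSEE}, since the map has the same Lipschitz constant $KL_F(\tfrac1{\alpha-\eta}+\tfrac1{\eta-\beta})<1$ in both cases), the two reconstructed orbits agree, hence $Q\bar U(0,\xi)+Qu^*(\omega)=Q\bar V(0,\xi)+Qz^*(\omega)$, i.e. $h(\xi,\omega)+Qu^*(\omega)=\widetilde h(\xi,\omega)+Qz^*(\omega)$, and evaluating at $t=0$ gives exactly $M(\omega)=\widetilde M(\omega)+z^*(\omega)$.

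The main obstacle I expect is bookkeeping the integrability and the boundary-at-$-\infty$ terms carefully: one must confirm that $z^*-u^*\in C^-_{\eta,H}$ almost surely (so that the change of variables maps one solution space into the other bijectively), that the stochastic convolutions are well defined pathwise in $H$ under $\operatorname{Tr}Q<\infty$, and that the "correction" terms involving $Pu^*$, $Pz^*$ and the $Q$-projected stationary integrals really do match term by term rather than merely up to something in the kernel of the projections. A secondary point is ensuring the two random dynamical systems $\Phi$ and $\Psi$ are conjugate via $V\mapsto V-z^*+u^*=V-(z^*-u^*)$, which is the dynamical counterpart of the same substitution and is what makes "invariant manifold of $\Psi$ shifted by $z^*$" equal "invariant manifold of $\Phi$ shifted by $u^*$"; this conjugacy is immediate from comparing~\eqref{e:U-REE} and~\eqref{e:V-REE} once one notes $F(u)-F(u^*)=F(U+u^*)-F(u^*)$ and $F(V+z^*)$ transform into each other under $U=V-(z^*-u^*)$.
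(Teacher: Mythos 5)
Your proposal is correct and follows essentially the same route as the paper: both arguments reconstruct the on-manifold orbit $\bar u=\bar U+u^{*}=\tilde V+z^{*}$, use the variation-of-constants representation of the stationary solution $u^{*}$ (which the paper obtains by noting that $V^{*}=u^{*}-z^{*}$ lies on $\widetilde M(\omega)$ with $\xi=Pu^{*}(0)$) to cancel the stochastic-convolution and projection terms, and conclude via uniqueness of the Lyapunov--Perron fixed point in $C^{-}_{\eta,H}$. The bookkeeping issues you flag are exactly the cancellations the paper carries out explicitly, so no further changes are needed.
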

\begin{proof}
By the Lyapunov--Perron method for a random dynamical system~\cite{DuanLuSch04}, the random dynamical system~$\Psi(t,\omega)$ defined by the random evolutionary equation~(\ref{e:V-REE})  has a random invariant manifold $\widetilde{M}(\omega)=\{(\xi-Pz^{*}(0), \tilde{h}(\xi, \omega)): \xi\in PH\}$ where $\tilde{h}(\xi, \omega)=Q\tilde{V}(0, \xi)$ and where $\tilde{V}$ is the unique solution of the integral equation
\begin{eqnarray*}
\tilde{V}(t,\xi)&=&e^{At}(\xi-Pz^{*}(0))+\int_{0}^{t}e^{A(t-s)}PF(\tilde{V}(s)+z^{*}(s))\,ds\\
&&{}+\int_{-\infty}^{t}e^{A(t-s)}QF(\tilde{V}(s)+z^{*}(s))\,ds
\end{eqnarray*}
in space~$C_{\eta, H}^{-}$.  Since the stationary solution~$V^{*}(\omega)$ lies on this random  invariant manifold, choosing  $\xi=Pu^{*}(0)$, we have $\tilde{V}(t,Pu^{*}(0))=V^{*}(t, \omega)$. Then
  by the expression for~$z^{*}$, 
\begin{eqnarray*}
u^{*}(t,\omega)&=&\tilde{V}(t,Pu^{*}(0))+z^{*}(t)\\
&=&e^{At}Pu^{*}(\omega)+\int_{0}^{t}e^{A(t-s)}PF(u^{*}(s))\,ds+\int_{0}^{t}e^{A(t-s)}PdW(s)\\&&{}+\int_{-\infty}^{t}e^{A(t-s)}QF(u^{*}(s))\,ds+\int_{-\infty}^{t}e^{A(t-s)}QdW(s)
\end{eqnarray*}
and 
\begin{equation}\label{e:u*}
u^{*}(\omega)=u^{*}(0, \omega)=
 Pu^{*}(\omega)+\int_{-\infty}^{0}e^{-As}QF(u^{*}(s))\,ds+\int_{-\infty}^{0}e^{-As}QdW(s).
\end{equation}

Notice that by the integral equation~(\ref{e:U-bar}),
 the solution~$\bar{u}$ to the \see~(\ref{e:u-SEE})  with initial value $\bar{u}(0)=(\xi, h(\xi, \omega))+u^{*}(\omega)$ is 
\begin{eqnarray*}
\bar{u}(t)&=&\bar{U}(t)+u^{*}(t)\\
&=&e^{At}\xi+\int_{0}^{t}e^{A(t-s)}PF(\bar{u}(s))\,ds+\int_{-\infty}^{t}e^{A(t-s)}QF(\bar{u}(s))\,ds+u^{*}(t)\\&&{}-e^{At}Pu^{*}-\int_{0}^{t}e^{A(t-s)}PF(u^{*}(s))\,ds-\int_{-\infty}^{t}e^{A(t-s)}QF(u^{*}(s))\,ds\,.
\end{eqnarray*}
Rewrite  the last three terms in the above equality 
and by~(\ref{e:u*})
\begin{eqnarray*}
&&e^{At}\left[Pu^{*}+\int_{-\infty}^{0}e^{-As}QF(u^{*}(s))\,ds+\int_{-\infty}^{0}e^{-As}QdW(s)\right]
\\&&{}+\int_{0}^{t}e^{A(t-s)}F(u^{*}(s))\,ds+\int_{0}^{t}e^{A(t-s)}dW(s)\\&&{}-\int_{-\infty}^{t}e^{A(t-s)}QdW(s)-\int_{0}^{t}e^{A(t-s)}PdW(s)\\&=&u^{*}(t,\omega)-\int_{-\infty}^{t}e^{A(t-s)}QdW(s)-\int_{0}^{t}e^{A(t-s)}PdW(s).
\end{eqnarray*}
Then we have 
\begin{eqnarray*}
\bar{u}(t)&=&e^{At}\xi+\int_{0}^{t}e^{A(t-s)}PF(\bar{u}(s))\,ds+\int_{-\infty}^{t}e^{A(t-s)}QF(\bar{u}(s))\,ds\\&&{}+\int_{-\infty}^{t}e^{A(t-s)}QdW(s)+\int_{0}^{t}e^{A(t-s)}PdW(s)\\
&=&\tilde{V}(t,\xi)+z^{*}(t).
\end{eqnarray*}
The proof is complete.
\end{proof}

The above theorem shows that if the \see~(\ref{e:u-SEE}) has a unique stationary solution~$u^{*}$,  then the random invariant manifold~$\mathcal{M}(\omega)$ to the stationary solution~$u^{*}$ can be derived from the random invariant manifold~$\widetilde{\mathcal{M}}(\omega)$ to~$V^{*}$, the stationary solution of~(\ref{e:V-REE}), by  
 the transformation $V=u-z^{*}$.

%
%
%
%
 
\section{Random invariant manifold for SWEs}\label{sec:RIM-SWE}
We construct a random inertial  invariant manifold for the \textsc{swe}~(\ref{e:SWE1})--(\ref{e:SWE2}) with fixed parameter $\nu>0$\,.

By the result of   Wang and Lv~\cite{LvWang10},  there is a stationary solution $(u^{*\nu}, u^{*\nu}_{t})\in H_{0}^{1}(D)\times L^{2}(D)$. Furthermore, this stationary solution is unique provided the Lipschitz inequality~(\ref{e:Lip}) holds~\cite{DaPrato}. 
 By the discussion at the end of the last section, we 
 use the transformation 
 $\bar{u}^{\nu}=u^{\nu}-z^{*\nu}$, and for technical reasons we make the change of variables
\begin{equation}\label{e:u-v-nu}
\bar{u}^{\nu}_{t}=-\frac{1}{2\nu}\bar{u}^{\nu}+\frac{1}{\nu}\bar{v}^{\nu}\quad \text{and} \quad \bar{U}^{\nu}=(\bar{u}^{\nu}, \bar{v}^{\nu}).
\end{equation}
The above change of variables is similar to that for the deterministic wave equation~\cite{ChowLu89} and to that in previous research on stochastic wave  equations~\cite{Liu08, LuSch07}.   For convenience, we give a simple description of the construction of the random inertial  manifold. By the definition of~$\bar{U}^{\nu}$ we have  a random differential equation
\begin{equation}\label{e:U-nu-REE}
 \bar{U}_{t}^{\nu}(t,\omega)=C\bar{U}^{\nu}(t,\omega)+F(\bar{U}^{\nu}(t,\omega), \theta_{t}\omega)
\end{equation}
where
\begin{eqnarray*}C=
\begin{bmatrix}
        -\frac{1}{2\nu} & \frac{1}{\nu} \\
        \frac{1}{4\nu}+A & -\frac{1}{2\nu} \\
      \end{bmatrix}, \quad F(\bar{U}^{\nu},\omega)=
      \begin{bmatrix}
        0 \\
        f(\bar{u}^{\nu}+z^{*\nu})\\
      \end{bmatrix}.
\end{eqnarray*}
   We apply Theorem~\ref{thm:RIMSEE} and Theorem~\ref{thm:RIMREE-V} to construct a random invariant manifold for equation~(\ref{e:U-nu-REE}) based upon a stationary solution~$(u^{*\nu}, u^{*\nu}_{t})$.

We first state some facts on the linear operator~$C$.
 Let $E=H_0^1(D)\times L^2(D)$ and $N>0$ be an integer. Set
\begin{eqnarray*}
E_{11}&=&\operatorname{span}\begin{Bmatrix} \begin{bmatrix}
                       e_{k} \\
                       0 \\
                     \end{bmatrix},
                     \begin{bmatrix}
                       0 \\
                      e_{k} \\
                     \end{bmatrix}:
k=1,\ldots,N \end{Bmatrix}\\
 E_{22}&=& \operatorname{span}
 \begin{Bmatrix} \begin{bmatrix}
                       e_{k} \\
                       0 \\
                     \end{bmatrix},
                     \begin{bmatrix}
                       0 \\
                       e_{k} \\
                     \end{bmatrix}:
k=N+1,N+2,\ldots \end{Bmatrix}.
\end{eqnarray*}
It is evident that $E=E_{11}\oplus E_{22}$, that $E_{11}$~is orthogonal to~$E_{22}$
by the orthogonality of~$\{e_{k}\}$, and that $\dim E_{11}=2N $\,. Moreover,
both $E_{11}$~and~$E_{22}$ are invariant subspaces of the operator~$C$.

Since the eigenvalues of~$A$ are~$-k^2$ with corresponding
eigenvectors~$e_{k}$, $k=1,2,\ldots$\,, by restricting~$C$ to~$E_{11}$, the eigenvalues of~$C|_{E_{11}}$ are
\begin{eqnarray*}
\lambda_k^{\pm}=\frac{-1\pm\sqrt{1-4\nu
k^2}}{2\nu}\,, \quad k=1,2,\ldots,N\,,
\end{eqnarray*}
with corresponding eigenvectors
\begin{eqnarray*}
e_k^{\pm}=\begin{bmatrix}
            e_{k}\\
            \pm\frac{\sqrt{1-4\nu k^2}}{2} e_{k}\\
          \end{bmatrix}, \quad k=1,2,\ldots, N\,.
\end{eqnarray*}
Let
\begin{eqnarray*}
E_1= \operatorname{span}\{e_k^+:k=1,\ldots,N\}, \quad 
E_{-1}= \operatorname{span}\{e_k^-:k=1,\ldots,N\}.
\end{eqnarray*}
By this definition $E_{11}=E_1\oplus E_{-1}$, and $E_1$~and~$E_{-1}$ are
invariant subspaces of the operator~$C$. Let $P_1$~and~$P_{-1}$ be
the corresponding spectral projections~\cite{Pazy83} and $P_{22}$ be
the unique orthogonal projection onto~$E_{22}$. Then there exist a
decomposition $E=E_1\oplus E_{-1}\oplus E_{22}$ with projections~$P_1,
P_{-1}, P_{22}$ respectively. Note that $E_1$ is not orthogonal to~$E_{-1}$. To overcome this we invoke an equivalent inner product on~$E$, as defined for the deterministic wave equations~\cite{Mora87}, to ensure $E_{1}$~is orthogonal to~$E_{-1}$.

Let
$U_i=(u_i,v_i)$, $i=1,2$\,, be two elements of~$E$ or $E_{11}$,
$E_{22}$. Assume $\frac{1}{2\sqrt{\nu}}>N+1$, and define the new inner
products on~$E_{11}$ and~$E_{22}$ as 
\begin{eqnarray*}
\langle U_1,U_2 \rangle_{E_{11}}&=&\langle\nu
Au_1,u_2\rangle+\frac{1}{4}\langle u_1,u_2\rangle+\langle v_1,
v_2\rangle,\\
\langle U_1,U_2 \rangle_{E_{22}}&=&\langle
-Au_1,u_2\rangle+\left(\frac{1}{4\nu}-2(N+1)^2\right)\langle
u_1,u_2\rangle+\langle v_1, v_2\rangle,
\end{eqnarray*}
where $\langle\cdot,\cdot\rangle$ is the usual inner product of~$L^2(D)$. Define the new inner product on~$E$ by
\begin{eqnarray*}
\langle U, V\rangle_E=\langle U_{11},V_{11}\rangle_{E_{11}}+\langle
U_{22},V_{22}\rangle_{E_{22}}
\end{eqnarray*}
where $U=U_{11}+U_{22}$ and $V=V_{11}+V_{22}$ with $U_{ii},
V_{ii}\in E_{ii}$, $i=1, 2$\,. The corresponding norm is denoted by~$\|\cdot\|_E$.

 Since $\frac{1}{2\sqrt{\nu}}>N+1$\,, 
$\langle \cdot,\cdot\rangle_{ E_{11}}$ is equivalent to the usual
inner product on~$E_{11}$, and $\langle \cdot,\cdot\rangle
_{E_{22}}$ is equivalent to the usual inner product on~$E_{22}$.
Hence the new inner product~$\langle \cdot,\cdot\rangle_E$ is
equivalent to the usual inner product on~$E$~\cite{Mora87}.

In terms of this new inner product, by the orthogonality of~$\sin kx$, direct methods 
 verify  that $E_{-1}\perp E_{22}$ and $E_1\perp E_{22}$.
Moreover, $E_1\perp E_{-1}$. Let $E_2=E_{-1}\oplus E_{22}$, then
$E_1\perp E_2$.

By the definition of the new inner product, for 
$U=(0,v)\in E$\,,
\begin{eqnarray}\label{norm1}
\|U\|_E=\|v\|_{L^{2}(D)},
\end{eqnarray}
and for any $U=(u,v)\in E$\,, 
 \begin{eqnarray}\label{norm2}
\|U\|_E\geq\sqrt{\frac{1}{4}-\nu(N+1)^2}\|u\|_{L^{2}(D)}\,.
\end{eqnarray}

Let $C_1,C_2, C_{-1}, C_{22}$ denote $C|_{E_1}, C|_{E_2},
C|_{E_{-1}}, C|_{E_{22}}$, respectively. Then similar to Mora's bounds~\cite{Mora87},
\begin{align}
\|e^{C_1t}\|&\leq e^{\lambda_N^+t}\quad\text{for }t\leq 0\,,\label{e:1}\\
\|e^{C_{-1}t}\|&\leq e^{\lambda_N^-t}\quad\text{for }t\geq 0\,,\label{e:2}\\
\|e^{C_{22}t}\|&\leq e^{\lambda_{N+1}^+t}\quad\text{for }t\geq
0\,.\label{e:3}
\end{align}
By the bounds~(\ref{e:2}) and~(\ref{e:3}), we have
\begin{eqnarray*}
\|e^{C_2t}\|\leq e^{\lambda_{N+1}^+t}\quad\text{for }t\geq 0\,.
\end{eqnarray*}
For the nonlinearity~$F$, in terms of the new norm,  by~(\ref{norm1}) and~(\ref{norm2}), 
\begin{eqnarray*}
\|F(\bar{U}_1,\omega)-F(\bar{U}_2,\omega)\|_E
&=&\|f(\bar{u}_1+z^{*\nu})-f(\bar{u}_2+z^{*\nu})\|_{L^{2}(D)}\\
&\leq&L_f\|\bar{u}_1-\bar{u}_2\|_{L^{2}(D)}\\
&\leq&\frac{L_f}{\sqrt{\frac{1}{4}-\nu(N+1)^2}}
\|\bar{U}_1-\bar{U}_2\|_E\\
&\leq&3L_f\|\bar{U}_1-\bar{U}_2\|_E.
\end{eqnarray*}
So $F$ is Lipschitz with respect to~$\bar{U}$ and the Lipschitz constant is independent of~$\nu$ provided  the parameter~$\nu$ is small.

Notice that by choosing $\alpha=\lambda_{N}^{+}$\,, $\beta=\lambda_{N+1}^{+}$ and $\eta=(\lambda_{N}^{+}+\lambda_{N+1}^{+})/2$\,, for $\nu>0$ small enough, the gap condition~(\ref{e:gapcondition}) in Theorem~\ref{thm:RIMSEE} holds. Then a similar  discussion to that by Liu~\cite{Liu08} and Lu \& Schmalfu\ss~\cite{LuSch07} leads to the following theorem.
\begin{theorem}\label{thm:RIMSWE}
There exists $\nu_{0}>0$ such that for any $\nu\in  (0, \nu_{0})$, there is an $N$-dimensional  inertial manifold~$\overline{\mathcal{M}}^{\nu}_{E}(\omega)$ for equation~(\ref{e:U-nu-REE}), which is represented by
\begin{equation*}
 \overline{\mathcal{M}}^{\nu}_{E}(\omega)=\{(\xi, h^{\nu}(\xi, \omega)): \xi\in E_{1}\}
\end{equation*}
with 
\begin{equation*}
h^{\nu}(\cdot,  \omega): E_{1}\rightarrow E_{2}
\end{equation*}
being Lipschitz continuous. 
Moreover, if $f\in C^{1}(L^{2}(D), L^{2}(D))$, then the random invariant manifold is~$C^{1}$, that is $h^{\nu}\in C^{1}(E_{1}, E_{2})$.
\end{theorem}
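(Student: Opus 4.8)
The plan is to verify that the hypotheses of Theorem~\ref{thm:RIMSEE} and Theorem~\ref{thm:RIMREE-V} are met for the random differential equation~(\ref{e:U-nu-REE}), with the phase space $E$ equipped with the new inner product $\langle\cdot,\cdot\rangle_E$, and then to read off the conclusion. Writing $P=P_1$ and $I-P=P_2$ (projection onto $E_2=E_{-1}\oplus E_{22}$), Condition~\ref{ass:3} is exactly what the bounds~(\ref{e:1})--(\ref{e:3}) provide: $e^{Ct}$ commutes with $P_1$ since $E_1$ and $E_2$ are $C$-invariant; $e^{C_1t}$ is an isomorphism of the finite-dimensional $E_1$ onto itself for every $t\in\R$; and the exponential dichotomy holds with $\alpha=\lambda_N^+<0$ on $E_1$ (for $t\le0$), $\beta=\lambda_{N+1}^+<0$ on $E_2$ (for $t\ge0$), and bound $K=1$ in the new norm. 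One must check $\beta<\alpha<0$: since $\lambda_k^+=\frac{-1+\sqrt{1-4\nu k^2}}{2\nu}$ is strictly decreasing in $k$ for $k<\frac{1}{2\sqrt\nu}$, and we have assumed $\frac{1}{2\sqrt\nu}>N+1$, indeed $\lambda_{N+1}^+<\lambda_N^+<0$.

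Next I would invoke the Lipschitz bound on $F$ established just above the theorem: $\|F(\bar U_1,\omega)-F(\bar U_2,\omega)\|_E\le 3L_f\|\bar U_1-\bar U_2\|_E$ for $\nu$ small, so $F$ is globally Lipschitz in the new norm with constant $L_F\le 3L_f$ independent of $\nu$. Choosing $\eta=(\lambda_N^++\lambda_{N+1}^+)/2$, the spectral gap $\alpha-\eta=\eta-\beta=(\lambda_N^+-\lambda_{N+1}^+)/2$ tends to a fixed positive number as $\nu\to0$ (indeed, a short computation shows $\lambda_N^+-\lambda_{N+1}^+\to (N+1)^2-N^2=2N+1$), so $K L_F\big(\frac{1}{\alpha-\eta}+\frac{1}{\eta-\beta}\big)\to 0$ as $\nu\to0$; hence for all sufficiently small $\nu$ the first gap condition of Theorem~\ref{thm:RIMSEE} holds, yielding a Lipschitz random invariant manifold represented by a map $h^\nu(\cdot,\omega):E_1\to E_2$. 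Since the Lipschitz constant $L_{h^\nu}$ produced by the Lyapunov--Perron fixed point argument is bounded uniformly in $\nu$ (it is controlled by $KL_F/(\alpha-\eta)$, which is $o(1)$), the refined gap condition~(\ref{e:gapcondition}), $KL_F\big(\frac{1}{\alpha-\eta}+\frac{1}{\eta-\beta}\big)+K^2L_{h^\nu}L_F\frac{1}{\alpha-\eta}<1$, likewise holds for $\nu<\nu_0$, so the manifold is in fact inertial; its dimension is $\dim E_1=N$. Finally, when $f\in C^1(L^2(D),L^2(D))$ the substituted nonlinearity $F(\cdot,\omega)$ is $C^1$ on $E$, and the $C^1$-regularity clause of Theorem~\ref{thm:RIMSEE} gives $h^\nu\in C^1(E_1,E_2)$. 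Theorem~\ref{thm:RIMREE-V} then identifies this manifold (built via the $z^{*\nu}$-transformation) with the invariant manifold attached to the stationary solution $(u^{*\nu},u^{*\nu}_t)$, whose existence and uniqueness were recalled at the start of the section.

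The main obstacle is not any single deep step but the uniformity in $\nu$: one must be careful that $K$, $L_F$, $L_{h^\nu}$, and the gaps $\alpha-\eta$, $\eta-\beta$ all behave controllably as $\nu\to0$, so that a single threshold $\nu_0$ works for every claim (existence, inertial property, $C^1$ regularity). The bound $K=1$ is delicate: it rests on the specific choice of the equivalent inner product $\langle\cdot,\cdot\rangle_E$ adapted from Mora~\cite{Mora87}, under which $E_1\perp E_2$, and on the sign conditions $\frac{1}{2\sqrt\nu}>N+1$ that make both component inner products positive definite; I would present the verification of~(\ref{e:1})--(\ref{e:3}) in this norm as the technical heart, then treat the gap estimates as elementary asymptotics in $\nu$, and conclude by citing Theorems~\ref{thm:RIMSEE} and~\ref{thm:RIMREE-V} verbatim. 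This parallels the arguments of Liu~\cite{Liu08} and Lu \& Schmalfu\ss~\cite{LuSch07}, as remarked before the statement.
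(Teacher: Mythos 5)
Your proposal follows essentially the same route as the paper: the paper's ``proof'' is precisely the paragraph preceding the theorem statement (set $\alpha=\lambda_N^+$, $\beta=\lambda_{N+1}^+$, $\eta=(\lambda_N^++\lambda_{N+1}^+)/2$, note the Lipschitz constant $3L_f$ is $\nu$-independent, check the gap condition~(\ref{e:gapcondition}), and cite Liu~\cite{Liu08} and Lu \& Schmalfu\ss~\cite{LuSch07} together with Theorems~\ref{thm:RIMSEE} and~\ref{thm:RIMREE-V}); you simply fill in more of the verification of Condition~\ref{ass:3} with $K=1$ in the Mora-type inner product.

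One quantitative claim in your write-up is wrong, though it does not sink the argument. You assert that $KL_F\bigl(\frac{1}{\alpha-\eta}+\frac{1}{\eta-\beta}\bigr)\to 0$ as $\nu\to 0$. Since $\alpha-\eta=\eta-\beta=(\lambda_N^+-\lambda_{N+1}^+)/2\to(2N+1)/2$, this quantity in fact converges to the \emph{positive} constant $\frac{12L_f}{2N+1}$, not to zero; likewise your bound on $L_{h^\nu}$ is $O(1)$, not $o(1)$. What makes the gap condition hold is therefore not smallness of $\nu$ alone but the largeness of $N$ (equivalently the standing hypothesis ``$N>0$ large enough'' that appears in Theorems~\ref{thm:RIMSHE}, \ref{thm:app1} and~\ref{thm:main}, together with the restriction~(\ref{e:Lip}) on $L_f$); smallness of $\nu$ is only needed to keep $\frac{1}{2\sqrt{\nu}}>N+1$, to keep the eigenvalues $\lambda_k^\pm$ real for $k\le N+1$, and to push the limit within reach. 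You should restate the conclusion as: for $N$ large enough there is $\nu_0(N)>0$ such that~(\ref{e:gapcondition}) holds for all $\nu\in(0,\nu_0)$. With that correction the rest of your argument (uniform bound on $L_{h^\nu}$, the $C^1$ clause, and the identification via Theorem~\ref{thm:RIMREE-V}) goes through as in the paper.
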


For our purposes we need some estimates of the  solution on the random  invariant manifold~$\overline{\mathcal{M}}^{\nu}_{E}(\omega)$.  For $\bar{U}^{\nu}_{0}=(\xi,  h^{\nu}(\xi, \omega))\in \overline{\mathcal{M}}^{\nu}_{E}(\omega)$, by the invariance of~$\overline{\mathcal{M}}^{\nu}_{E}(\omega)$,  $\bar{U}^{\nu}(t,\omega)$, the solution of~(\ref{e:U-nu-REE}) with $\bar{U}^{\nu}(0)=\bar{U}^{\nu}_{0}$, lies on~$\overline{\mathcal{M}}^{\nu}_{E}(\theta_{t}\omega)$. Then, by the construction of the random invariant manifold and  the uniqueness of solutions, for~$t\leq 0$
\begin{eqnarray*} 
\bar{U}^{\nu}(t, \omega)&=&e^{Ct}\xi+\int_{0}^{t}P_{1}e^{C(t-s)}F(\bar{U}^{\nu}(s, \omega),  \omega)\,ds\\&&{}+\int_{-\infty}^{t}(P_{-1}+P_{22})e^{C(t-s)}F(\bar{U}^{\nu}(s, \omega), \omega)\,ds\,.
\end{eqnarray*}
Notice that 
\begin{equation*}
\|F(\bar{U}^{\nu})\|_{E} \leq \|f(\bar{u}^{\nu}+z^{*\nu})\|_{L^{2}(D)}\leq 3L_{f}(\|\bar{U}^{\nu}\|_{E}+\|z^{*\nu}\|+1).
\end{equation*}
Then, by the gap condition~(\ref{e:gapcondition}), a direct calculation yields  
\begin{equation}\label{e:bar-U-bound}
|\bar{U}^{\nu}|_{C_{\eta, E}^{-}}\leq  R_{1}(\omega)
\end{equation}
for some tempered random variable~$R_{1}$. Here the Banach space~$C_{\eta, E}^{-}$~is defined by~(\ref{e:C-space}) through replacing~$H$ by~$E$.
Next we need the same estimate on~$\bar{U}^{\nu}_{t}$. Since $\bar{U}^{\nu}(t, \omega)$ lies on~$\overline{\mathcal{M}}^{\nu}(\omega)$, we have 
\begin{equation*}
\bar{U}^{\nu}(t,\omega)=\bar{U}_{N}^{\nu}(t,\omega)+h^{\nu}(\bar{U}_{N}^{\nu}(t,\omega), \omega)
\end{equation*}
 with $\bar{U}_{N}^{\nu}(0,\omega)=\xi$ and $\bar{U}_{N}^{\nu}(t,\omega)\in P_{1}E$ for $t\in\R$\,. 
 Moreover,
 \begin{equation*}
 \dot{\bar{U}}_{N}^{\nu}(t,\omega)
 =C_{1}\bar{U}_{N}^{\nu}(t,\omega)+P_{1}F(\bar{U}_{N}^{\nu}(t,\omega)+h^{\nu}(\bar{U}_{N}^{\nu}(t,\omega), \omega)).
 \end{equation*}
 Then by~(\ref{e:bar-U-bound}), (\ref{e:1}), the spectrum gap condition~(\ref{e:gapcondition}) and the Lipschitz property of~$h^{\nu}$,
 \begin{equation}\label{e:bar-U-N-bound}
 |\dot{\bar{U}}_{N}^{\nu}(t,\omega)|_{C^{-}_{\eta, E}}\leq R'_{2}(\omega).
 \end{equation}
 Notice that 
 \begin{equation*}
 \dot{\bar{U}}^{\nu}(t,\omega)=\dot{\bar{U}}_{N}^{\nu}(t,\omega)+Dh^{\nu}(\bar{U}_{N}^{\nu}(t,\omega), \omega)\dot{\bar{U}}^{\nu}_{N}(t,\omega),
 \end{equation*}
 then by the bound~(\ref{e:bar-U-N-bound}), for some tempered random variable~$R_{2}$, 
 \begin{equation}\label{e:bar-d-U-bounded}
 |\dot{\bar{U}}^{\nu}(t,\omega)|_{C^{-}_{\eta, E}}\leq R_{2}(\omega).
 \end{equation}


\section{Approximation of random inertial manifold}
 
This section addresses the approximation  of~$\overline{\mathcal{M}}_{E}^{\nu}$ for small $\nu> 0$\,.  First we consider the stochastic heat equation~(\ref{e:SHE1})--(\ref{e:SHE2}). Recall the stationary process~$z^{*}$ that solves~(\ref{e:eta}). 
We make the transformation $\tilde{u}=u-z^{*}$,  and derive $\tilde u$ satisfies the \rds
\begin{equation}\label{e:RHE}
\tilde{u}_{t}(t,\omega)=\Delta \tilde{u}(t,\omega)+f(\tilde{u}(t,\omega)+z^{*}(\theta_{t}\omega)).
\end{equation}
Notice that under our assumptions, the stochastic nonlinear heat equation~(\ref{e:SHE1})--(\ref{e:SHE2}) has a unique stationary solution. By  Theorem~\ref{thm:RIMSEE} the following theorem holds. 
\begin{theorem}\label{thm:RIMSHE}
Assume $f\in C^{1}(L^{2}(D), L^{2}(D))$ and $N>0$ large enough   
Then the random equation~(\ref{e:RHE})  has an $N$-dimensional $C^{1}$ random inertial manifold~$\widetilde{\mathcal{M}}_{L^{2}(D)}(\omega)$  with 
\begin{equation*}
\widetilde{\mathcal{M}}_{L^{2}(D)}(\omega)=\left\{(\zeta, h(\zeta, \omega)): \zeta\in P_{N}L^{2}(D)\right\}
\end{equation*}  
where $P_{N}$ is the orthogonal projection from~$L^{2}(D)$ to the  $N$-dimensional space $\operatorname{span}\{e_{1}, e_{2}, \ldots, e_{N}\}$.
\end{theorem}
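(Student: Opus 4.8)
The plan is to verify that the random heat equation~(\ref{e:RHE}) fits exactly into the abstract framework of Section~\ref{sec:RIM}, and then simply invoke Theorem~\ref{thm:RIMSEE} together with Theorem~\ref{thm:RIMREE-V}. Concretely, take $H=L^2(D)$, let $A=\Delta$ with Dirichlet boundary conditions be the generator as already introduced in Section~2, and set $F(u)=f(u)$. Since $f:\R\to\R$ is Lipschitz with constant~$L_f$ and $f(0)=0$, the Nemytskii operator $F$ is globally Lipschitz on $L^2(D)$ with the same constant $L_F=L_f$, so the standing hypotheses on~$F$ in~(\ref{e:u-SEE}) hold. The stationary process $z^{*}$ solving~(\ref{e:eta}) is exactly the process used in the $V=u-z^{*}$ transformation of Section~\ref{sec:RIM}, and after this substitution~(\ref{e:RHE}) is precisely an instance of~(\ref{e:V-REE}); moreover the stochastic heat equation~(\ref{e:SHE1})--(\ref{e:SHE2}) has a unique stationary solution under our assumptions (noted just before the statement), so Theorem~\ref{thm:RIMREE-V} applies and the manifold obtained for~(\ref{e:RHE}) does correspond to the inertial manifold of the \she.

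Next I would check the exponential dichotomy (Condition~\ref{ass:3}) and the spectral gap conditions~(\ref{e:gapcondition}). Since $-A$ has eigenvalues $\lambda_k=k^2$ with orthonormal eigenfunctions $\{e_k\}$, take $P=P_N$, the orthogonal projection onto $\operatorname{span}\{e_1,\dots,e_N\}$. Then $Pe^{At}=e^{At}P$, the restriction of $e^{At}$ to $P_NL^2(D)$ is invertible for all $t$, and one has the bounds $\|e^{At}P_N x\|\le e^{-\lambda_N t}\|x\|$ for $t\le 0$ and $\|e^{At}(I-P_N)x\|\le e^{-\lambda_{N+1}t}\|x\|$ for $t\ge 0$, so Condition~\ref{ass:3} holds with $K=1$, $\alpha=-\lambda_N=-N^2$ and $\beta=-\lambda_{N+1}=-(N+1)^2$. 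Choosing $\eta=-(\lambda_N+\lambda_{N+1})/2$, the basic gap quantity is $KL_F(\tfrac{1}{\alpha-\eta}+\tfrac{1}{\eta-\beta})=\tfrac{4L_f}{(N+1)^2-N^2}=\tfrac{4L_f}{2N+1}$, which tends to~$0$ as $N\to\infty$; since also $L_h=O(1/N)$ from the Lyapunov--Perron construction, the stronger inequality~(\ref{e:gapcondition}) likewise holds once $N$ is large enough. Hence Theorem~\ref{thm:RIMSEE} yields a Lipschitz random inertial manifold, and the $C^1$ smoothness follows from the $C^1$ assumption on $f$ (i.e. $F\in C^1$), giving $h(\cdot,\omega)\in C^1(P_NL^2(D),(I-P_N)L^2(D))$. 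Finally, by Theorem~\ref{thm:RIMREE-V} this translates (via $+z^{*}(\omega)$) into the inertial manifold of the \she, and written in the $V$-variables it is exactly the graph $\widetilde{\mathcal{M}}_{L^2(D)}(\omega)=\{(\zeta,h(\zeta,\omega)):\zeta\in P_NL^2(D)\}$ claimed.

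I do not expect any serious obstacle here: the statement is essentially a specialization of Theorem~\ref{thm:RIMSEE} to the heat semigroup, and all the work has already been done abstractly. The only mild point requiring care is to make the dependence on $N$ explicit enough to see that ``$N$ large enough'' forces the full gap condition~(\ref{e:gapcondition}) and not merely the weaker existence condition, and to note that attractivity (hence the word ``inertial'') is part of Theorem~\ref{thm:RIMSEE} under~(\ref{e:gapcondition}); both are routine given the eigenvalue spacing $\lambda_{N+1}-\lambda_N=2N+1\to\infty$. One should also record explicitly that the Lyapunov--Perron fixed point is carried out in the space $C_{\eta,L^2(D)}^{-}$ of~(\ref{e:C-space}) with $H=L^2(D)$, so that the representation formula for $h$ and the resulting estimates (used later in Section~5) are available in a form parallel to the \swe\ bounds~(\ref{e:bar-U-bound})--(\ref{e:bar-d-U-bounded}).
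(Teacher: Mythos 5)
Your proposal is correct and follows essentially the same route as the paper, which simply invokes Theorem~\ref{thm:RIMSEE} (together with the transformation $\tilde u=u-z^{*}$ underlying Theorem~\ref{thm:RIMREE-V}) after noting the existence of a unique stationary solution; you merely spell out the verification that the paper leaves implicit, namely the dichotomy with $K=1$, $\alpha=-N^{2}$, $\beta=-(N+1)^{2}$, $\eta=-(N^{2}+(N+1)^{2})/2$, and the fact that the gap quantity $4L_{f}/(2N+1)$ vanishes as $N\to\infty$ so that~(\ref{e:gapcondition}) holds for $N$ large. This added detail is consistent with the paper's framework and introduces no new ideas or gaps.
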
 


Now we define the following  random  set  in~$E$
\begin{equation}\label{M_e}
\widetilde{\mathcal{M}}_{E}(\omega)=\left\{(\tilde{u}_{0}, \tilde{u}_{t}(0, \tilde{u}_{0})): \tilde{u}_{0}\in\widetilde{\mathcal{M}}_{L^{2}(D)}(\omega) \right\},
\end{equation}
and bounded random set
\begin{equation}\label{e:M_eR}
\widetilde{\mathcal{M}}_{ER}(\omega)=\left\{(\tilde{u}_{0}, \tilde{u}_{t}(0, \tilde{u}_{0})) :  \tilde{u}_{0}=\zeta+h(\zeta, \omega)\in
\widetilde{\mathcal{M}}_{L^{2}(D)}(\omega), \|\zeta\|_{L^{2}(D)}<R
\right\},
\end{equation}
where $\tilde{u}(t,\tilde{u}_{0})$ is the unique solution of the \rds~(\ref{e:RHE}) with initial value $\tilde{u}(0, \tilde{u}_{0})=\tilde{u}_{0}$\,,  and where $R>0$ is an arbitrary constant.

As mentioned in the Introduction, we have to  consider the \rds~(\ref{e:RHE}) on~$\psi^{\nu}\omega$.  For this we define $\tilde{u}^{\nu}(t,\omega)$ on the fiber~$\psi^{\nu}\omega$  solving 
\begin{equation}\label{e:RHE-u-tilde}
\tilde{u}^{\nu}_{t}(t,\omega)=\D \tilde{u}^{\nu}(t,\omega)+f(\tilde{u}^{\nu}(t,\omega)+z^{*}(\psi^{\nu}\theta_{t}\omega)).
\end{equation}
Then by the Lyapunov--Perron method we  have an $N$-dimensional  random invariant manifold 
which is exactly~$\widetilde{\mathcal{M}}_{L^{2}(D)}(\psi^{\nu}\omega)$.  
 We give a relation between~$\overline{M}^{\nu}_{E}(\omega)$ and~$\widetilde{\mathcal{M}}_{ER}(\psi^{\nu}\omega)$. 

We  need some estimates of the solution of equation~(\ref{e:RHE-u-tilde}) on~$\widetilde{\mathcal{M}}_{L^{2}(D)}(\psi^{\nu}\omega)$.  For $\tilde{u}_{0}=(\zeta, h(\zeta, \psi^{\nu}\omega))\in\widetilde{\mathcal{M}}_{L^{2}(D)}(\psi^{\nu}\omega)$,  then $\tilde{u}^{\nu}(t,\omega)$, the solution to equation~(\ref{e:RHE-u-tilde}) with $\tilde{u}^{\nu}(0, \omega)=\tilde{u}_{0}$\,, 
 by the invariance of~$\widetilde{\mathcal{M}}_{L^{2}(D)}(\psi^{\nu}\omega)$,   lies on~$\widetilde{\mathcal{M}}_{L^{2}(D)}(\psi^{\nu}\theta_{t}\omega)$.  Then, by a similar discussion for~$\bar{U}^{\nu}$, for $t\leq 0$
\begin{eqnarray*}
\tilde{u}^{\nu}(t,\omega)&=&e^{A t}\zeta+\int_{0}^{t}P_{N}e^{A(t-s)} f(\tilde{u}^{\nu}(s, \omega)+z^{*\nu}(s,\omega))\,ds
\\&&{}+\int_{-\infty}^{t}(\operatorname{Id}-P_{N})e^{A(t-s)}f(\tilde{u}^{\nu}(s, \omega)+z^{*\nu}(s, \omega))\,ds\,.
\end{eqnarray*}
By the gap condition and the Lipschitz property of~$f$,  for some tempered random variable~$R_{3}$, 
\begin{equation*}
|\tilde{u}^{\nu}|_{C^{-}_{\eta, L^{2}(D)}}\leq R_{3}(\omega).
\end{equation*}
Further, we need the following estimate  of~$\tilde{u}^{\nu}_{tt}$  with~$\tilde{u}^{\nu}(t,\omega)$ lying  on the random invariant manifold~$\widetilde{\mathcal{M}}_{ER}(\psi^{\nu}\theta_{t}\omega)$.

%
\begin{lemma}\label{lem:u-tt}
Assume the conditions of Theorem~\ref{thm:RIMSHE}. For each $R>0$\,,  such that for $\|\zeta\|_{L^{2}(D)}\leq R$ and $\zeta\in P_{N}L^{2}(D)$, then almost surely
\begin{equation*}
\nu\|e^{-\eta t}\tilde{u}^{\nu}_{tt}(t,\zeta+h(\zeta, \psi^{\nu}\omega),    \omega) \|_{L^{2}(D)}\rightarrow 0\,,\quad  t\leq 0\,,
\end{equation*}
where $\tilde{u}^{\nu}$ is the unique solution of the \rds~(\ref{e:RHE}) with $\tilde{u}^{\nu}(0)=\zeta+h(\zeta, \psi^{\nu}\omega)$.
\end{lemma}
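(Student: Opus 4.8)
The plan is to differentiate the random equation~(\ref{e:RHE-u-tilde}) once in~$t$, split the result along the manifold, estimate it in the norm of $C^{-}_{\eta,L^{2}(D)}$ and multiply by~$\nu$. Every resulting term is $O(\nu)$, \emph{except} those carrying a factor $z^{*\nu}_{t}$; these are only $O(\sqrt{\nu})$ up to a logarithm, because $z^{*\nu}$ solves the damped wave equation~(\ref{e:LSWEs}) whose stationary velocity has $L^{2}(D)$-size of order $\nu^{-1/2}$. Throughout I use the identity $z^{*}(\psi^{\nu}\theta_{t}\omega)=z^{*\nu}(\theta_{t}\omega)$ of Appendix~\ref{sec:App-A}.

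\emph{Step 1: the velocity process.} Writing~(\ref{e:LSWEs}) as a first order system, its stationary solution has a component $p^{*\nu}(\theta_{t}\omega):=\partial_{t}z^{*\nu}(\theta_{t}\omega)$ which is a continuous, stationary, tempered, Ornstein--Uhlenbeck-type $L^{2}(D)$-valued process. From its explicit stationary representation, $\mathbb{E}\|p^{*\nu}(\omega)\|^{2}=\operatorname{Tr}Q/(2\nu)$ and, on scales below the relaxation time~$\nu$, $\mathbb{E}\|p^{*\nu}(\theta_{t}\omega)-p^{*\nu}(\theta_{s}\omega)\|^{2}=O(\nu^{-2}|t-s|)$; while the \emph{position} $z^{*\nu}(\theta_{t}\omega)$ is, by the same representation, $\nu$-uniformly H\"older-$\tfrac12$ in~$t$. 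A Chebyshev/Borel--Cantelli argument then produces tempered random variables $\rho,\rho_{1}$, \emph{independent of} $\nu\in(0,\nu_{0})$, with
\[
\sqrt{\nu}\,\|p^{*\nu}(\theta_{t}\omega)\|\le\rho(\theta_{t}\omega),\qquad \|p^{*\nu}(\theta_{t}\omega)-p^{*\nu}(\theta_{s}\omega)\|\le\nu^{-1}\rho_{1}(\omega)\,|t-s|^{1/2}\quad(|t-s|\le\nu),
\]
and, by temperedness, $\sup_{t\le0}e^{-\eta t}\rho(\theta_{t}\omega)<\infty$ almost surely.

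\emph{Step 2: the finite dimensional part.} On the manifold, $\tilde{u}^{\nu}(t)=\tilde{u}^{\nu}_{N}(t)+h(\tilde{u}^{\nu}_{N}(t),\psi^{\nu}\theta_{t}\omega)$ with $\tilde{u}^{\nu}_{N}=P_{N}\tilde{u}^{\nu}$, $\tilde{u}^{\nu}_{N}(0)=\zeta$. Since $z^{*\nu}$ is $C^{1}$ in~$t$ and $e^{At}$ smooths the $\nu$-uniformly H\"older forcing $f(\tilde{u}^{\nu}(\cdot)+z^{*\nu}(\cdot))$, also $\tilde{u}^{\nu}$ is $C^{1}$ in~$t$; moreover (exactly as for~(\ref{e:bar-U-N-bound})--(\ref{e:bar-d-U-bounded}), now on the heat manifold) $|\tilde{u}^{\nu}|_{C^{-}_{\eta,L^{2}(D)}}$ and $|\tilde{u}^{\nu}_{t}|_{C^{-}_{\eta,L^{2}(D)}}$ are bounded by tempered random variables depending only on~$R$ and uniform for $\nu\in(0,\nu_{0})$. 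Differentiating~(\ref{e:RHE-u-tilde}) and splitting off $\tilde{u}^{\nu}_{N}$,
\[
\tilde{u}^{\nu}_{tt}(t)=\ddot{\tilde{u}}^{\nu}_{N}(t)+(I-P_{N})\tilde{u}^{\nu}_{tt}(t),\qquad \ddot{\tilde{u}}^{\nu}_{N}=A_{N}\dot{\tilde{u}}^{\nu}_{N}+P_{N}f'\big(\tilde{u}^{\nu}+z^{*\nu}\big)\big(\tilde{u}^{\nu}_{t}+p^{*\nu}(\theta_{t}\omega)\big).
\]
Since $\|A_{N}\|=\lambda_{N}$ is a \emph{fixed} constant and $|f'|\le L_{f}$, one gets $|\ddot{\tilde{u}}^{\nu}_{N}|_{C^{-}_{\eta,L^{2}(D)}}\le(\lambda_{N}+L_{f})|\tilde{u}^{\nu}_{t}|_{C^{-}_{\eta,L^{2}(D)}}+L_{f}\nu^{-1/2}\sup_{t\le0}e^{-\eta t}\rho(\theta_{t}\omega)$, so $\nu$ times it tends to~$0$ almost surely.

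\emph{Step 3: the slaved part, and the obstacle.} For $(I-P_{N})\tilde{u}^{\nu}_{tt}$ start from the mild representation
\[
(I-P_{N})\tilde{u}^{\nu}(t)=\int_{-\infty}^{t}(I-P_{N})e^{A(t-s)}f\big(\tilde{u}^{\nu}(s)+z^{*\nu}(\theta_{s}\omega)\big)\,ds,
\]
differentiate in~$t$ and integrate by parts in~$s$ (legitimate because $s\mapsto f(\tilde{u}^{\nu}(s)+z^{*\nu}(\theta_{s}\omega))$ is $C^{1}$), which gives $(I-P_{N})\tilde{u}^{\nu}_{t}(t)=\int_{-\infty}^{t}(I-P_{N})e^{A(t-s)}G(s)\,ds$ with $G(s):=f'\big(\tilde{u}^{\nu}(s)+z^{*\nu}(\theta_{s}\omega)\big)\big(\tilde{u}^{\nu}_{s}+p^{*\nu}(\theta_{s}\omega)\big)$; differentiating once more and using $\int_{0}^{\infty}(I-P_{N})Ae^{A\tau}\,d\tau=-(I-P_{N})$,
\[
(I-P_{N})\tilde{u}^{\nu}_{tt}(t)=\int_{-\infty}^{t}(I-P_{N})Ae^{A(t-s)}\big[G(s)-G(t)\big]\,ds.
\]
Now $\|(I-P_{N})Ae^{A(t-s)}\|\le C\min\{(t-s)^{-1},\lambda_{N+1}e^{-\lambda_{N+1}(t-s)}\}$; splitting $(-\infty,t]$ at $t-s=\nu$ and $t-s=1/\lambda_{N+1}$ and inserting, for $t-s\le\nu$, the increment bound $\|G(s)-G(t)\|\le C\nu^{-1}\rho_{1}(\omega)(t-s)^{1/2}+\cdots$ of Step~1 and, for larger lags, $\|G(s)-G(t)\|\le C(\nu^{-1/2}\rho(\theta_{s}\omega)+\cdots)$, a direct computation bounds the integral in $C^{-}_{\eta,L^{2}(D)}$ by $C\nu^{-1/2}\log(1/\nu)$ times a tempered random variable depending on~$R$; hence $\nu$ times it is $O(\nu^{1/2}\log(1/\nu))\to0$ almost surely, uniformly for $\|\zeta\|_{L^{2}(D)}\le R$. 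Together with Step~2 this proves the lemma. The crux --- and the reason one loses a logarithm and must work on the \emph{bounded} set $\widetilde{\mathcal{M}}_{ER}$ --- is precisely the joint, uniform-in-$\nu$ and tempered-in-$\omega$ control of the size \emph{and} of the small-scale increments of the wave velocity $z^{*\nu}_{t}=p^{*\nu}$: its size is $O(\nu^{-1/2})$ (just absorbed by the prefactor~$\nu$) but its increments are $O(\nu^{-1}|t-s|^{1/2})$ below the relaxation time~$\nu$, so the a~priori logarithmically divergent smoothing integral above must be cut exactly at scale~$\nu$. Establishing these estimates from the explicit stationary representation of $(z^{*\nu},z^{*\nu}_{t})$ (Appendix~\ref{sec:App-A}) is the heart of the argument.
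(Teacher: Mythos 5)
Your driving idea --- differentiate the reduced equation in $t$, split along $P_{N}$ and $I-P_{N}$, and let the prefactor $\nu$ absorb the $O(\nu^{-1/2})$ size of the wave velocity $z^{*\nu}_{t}$ --- is the same as the paper's, and your Steps 1 and 2 essentially reproduce what the paper does (Appendix~\ref{sec:App-A} records $\nu^{2}\mathbb{E}\|z^{*\nu}_{t}\|^{2}=\nu\operatorname{Tr}Q$, i.e.\ your $\mathbb{E}\|p^{*\nu}\|^{2}=O(\nu^{-1})$, and the finite-dimensional part is controlled through the equation~(\ref{e:u_N}) for $\tilde{u}^{\nu}_{N}$ exactly as you do). Where you genuinely diverge is the slaved part: the paper never touches the singular kernel $(I-P_{N})Ae^{A(t-s)}$. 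It sets $\tilde{w}^{\nu}=\nu\tilde{u}^{\nu}_{t}$, notes that $\tilde{w}^{\nu}$ solves the linear nonautonomous equation $\tilde{w}^{\nu}_{t}=A^{\nu}\tilde{w}^{\nu}+\nu B^{\nu}$ with $A^{\nu}=\Delta+Df(\tilde{u}^{\nu}+z^{*\nu})$ and $B^{\nu}=Df(\cdots)z^{*\nu}_{t}$, and runs the Lyapunov--Perron argument with the full linearized evolution family $S^{\nu}(t,\omega)=\exp\{\int_{0}^{t}A^{\nu}\,ds\}$ (citing~\cite{Cara} for its pseudo-hyperbolicity), so the forcing enters only through bounded propagators and the smallness comes from $\nu B^{\nu}=O(\sqrt{\nu})$ via~(\ref{e:nu-eta-t-bound}).

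The step of yours that does not close as written is the increment bound in Step 3. After the identity $\int_{0}^{\infty}(I-P_{N})Ae^{A\tau}\,d\tau=-(I-P_{N})$ you must integrate $\|(I-P_{N})Ae^{A(t-s)}\|\sim(t-s)^{-1}$ against $\|G(s)-G(t)\|$ near $s=t$, so \emph{every} contribution to $G(s)-G(t)$ needs a power-law (or at least Dini) modulus in $|t-s|$; you supply one only for the increment of $p^{*\nu}$. The suppressed ``$+\cdots$'' contains (i) $[f'(\tilde{u}^{\nu}(s)+z^{*\nu}(s))-f'(\tilde{u}^{\nu}(t)+z^{*\nu}(t))]\,p^{*\nu}(s)$, which under the standing hypothesis $f\in C^{1}$ (no H\"older continuity of $f'$ is assumed anywhere in the paper) is only $o(1)\cdot O(\nu^{-1/2})$ with no rate, so $\int_{t-\nu}^{t}(t-s)^{-1}(\cdots)\,ds$ need not even be finite, let alone $O(\nu^{-1/2}\log(1/\nu))$; and (ii) $f'(\cdots)[\tilde{u}^{\nu}_{s}(s)-\tilde{u}^{\nu}_{t}(t)]$, whose H\"older control in $s$ is essentially a bound on $\tilde{u}^{\nu}_{tt}$ --- the quantity being estimated --- so it requires a separate bootstrap or smoothing estimate for $\tilde{u}^{\nu}_{t}$ that you do not give. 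A secondary but real issue is that the tempered majorants $\rho,\rho_{1}$ of Step 1 must be uniform over the continuum $\nu\in(0,\nu_{0})$ for the almost sure limit to make sense; a single Chebyshev/Borel--Cantelli pass gives this only along sequences. Either repair Step 3 (strengthen to $f\in C^{1,\alpha}$ and bootstrap the temporal regularity of $\tilde{u}^{\nu}_{t}$), or adopt the paper's substitution $\tilde{w}^{\nu}=\nu\tilde{u}^{\nu}_{t}$, which sidesteps the singular kernel entirely.
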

\begin{proof}
By  the invariance of~$\widetilde{\mathcal{M}}_{L^{2}(D)}(\psi^{\nu}\omega)$ for $\tilde{u}_{0}=\zeta+h(\zeta, \psi^{\nu}\omega)$ 
\begin{equation*}
\tilde{u}^{\nu}(t, \tilde{u}_{0}, \omega)=\tilde{u}^{\nu}_{N}(t,\zeta, \omega)+h(\tilde{u}^{\nu}_{N}(t,\zeta,\omega), \psi^{\nu}\omega)
\end{equation*}
with 
\begin{equation}\label{e:u_N}
\tilde{u}^{\nu}_{N,t}=\Delta\tilde{u}^{\nu}_{N}+P_{N}f(\tilde{u}^{\nu}_{N}+h(\tilde{u}^{\nu}_{N},  \psi^{\nu}\omega)+z^{*\nu}(t, \omega)),\quad \tilde{u}^{\nu}_{N}(0)=\zeta\,.
\end{equation}
Here we use the equality $z^{*}(\psi^{\nu}\theta_{t}\omega)=z^{*\nu}(\theta_{t}\omega)=z^{*\nu}(t,\omega) $.
By a similar discussion to that for the Lyapunov--Perron method to construct a random invariant manifold, we can construct a unique solution~$\tilde{u}_{N}$ to~(\ref{e:u_N}) which is in the space~$C^{-}_{\eta, P_{N}L^{2}(D)}$.  Then 
\begin{equation*}
\|e^{-\eta t}\tilde{u}^{\nu}_{N}(t,   \omega)\|_{L^{2}(D)}\leq C(\omega),\quad t\leq 0\,,
\end{equation*}
for some random constant~$C(\omega)$ which is independent of parameter~$\nu$.
By Theorem~\ref{thm:RIMSHE},  $h\in C^{1}$ and Lipschitz, we have 
\begin{equation}
\nu\|\tilde{u}^{\nu}_{t}\|_{L^{2}(D)}=\nu\|\tilde{u}^{\nu}_{N,t}\|_{L^{2}(D)}+\nu L_{h}\|\tilde{u}^{\nu}_{N,t}\|_{L^{2}(D)}\,.
\end{equation} 
Then by~(\ref{e:u_N}) and above estimate for~$e^{-\eta t}\tilde{u}^{\nu}_{N}$, 
\begin{equation*}
\nu\|e^{-\eta t}\tilde{u}^{\nu}_{t}(t,  \omega)\|_{L^{2}(D)}\rightarrow 0\quad \text{almost surely for any } t\leq 0\,, \quad \text{as}\quad \nu\rightarrow 0\,.
\end{equation*}

Now let $\tilde{w}^{\nu}=\nu\tilde{u}^{\nu}_{t}$\,, then $\tilde{w}^{\nu}\in C_{\eta, L^{2}(D)}^{-}$ and
\begin{equation*}
\tilde{w}^{\nu}_{t}(t, \omega)=A^{\nu}(t,  \omega)\tilde{w}^{\nu}(t, \omega)+\nu B^{\nu}(t,  \omega),\quad \tilde{w}^{\nu}(0)=\nu\tilde{u}^{\nu}_{t}(0),
\end{equation*}
with 
\begin{eqnarray*}
A^{\nu}(t, \omega)&=&\Delta+Df(\tilde{u}^{\nu}(t, \omega)+z^{*\nu}(t, \omega)),\\
B^{\nu}(t, \omega)&=&Df(\tilde{u}^{\nu}(t, \omega)+z^{*\nu}(t, \omega))z_{t}^{*\nu}(t, \omega).
\end{eqnarray*}
 Under the assumptions of Theorem~\ref{thm:RIMSHE},
$\tilde{w}^{\nu}\in C_{\eta, L^{2}(D)}^{-}$ is equivalent to the statement that  $\tilde{w}^{\nu}$ has the following form for $t\leq 0$\,,
\begin{eqnarray*}
\tilde{w}^{\nu}(t, \omega)&=&S^{\nu}(t, \omega)P_{N}\tilde{w}^{\nu}(0)+\nu\int_{0}^{t}S^{\nu}(t-s,  \omega)P_{N}B^{\nu}(s,  \omega)\,ds\\
&&{}+\nu\int_{-\infty}^{t}S^{\nu}(t-s,  \psi^{\nu}\omega)(I-P_{N})B^{\nu}(s,  \omega)\,ds
\end{eqnarray*}
where 
\begin{equation*}
S^{\nu}(t,\omega)=\exp\left\{\int_{0}^{t}A^{\nu}(s, \omega)\,ds\right\}.
\end{equation*}
Notice that $\tilde{u}^{\nu}(t,\omega)$ lies on the random invariant manifold~$\widetilde{\mathcal{M}}_{L^{2}(D)}(\psi^{\nu}\theta_{t}\omega)$, that $z^{*\nu}$~is stationary, that by the assumption~(\ref{e:Lip})    $S^{\nu}(t,\omega)$~is  nonuniformly pseudo-hyperbolic~\cite{Cara}, and that $B^{\nu}(t,\omega)$~is tempered  and locally integrable in~$t$. Then by a similar discussion to that for random evolutionary equation~\cite{Cara}, we also can construct a random  invariant manifold. 
Then, by the same discussion above for the estimates~$\nu\|e^{-\eta t}\tilde{u}^{\nu}_{t}(t,\omega)\|_{L^{2}(D)}$  and the estimate~(\ref{e:nu-eta-t-bound}), the bound on~$\nu z_{t}^{*\nu}(t, \omega)$ in Appendix \ref{sec:App-A},   we have the estimate
\begin{equation*}
\nu\|e^{-\eta t}\tilde{w}^{\nu}_{t}(t,\omega)\|_{L^{2}(D)}\rightarrow 0\quad \text{almost surely for any } t\leq 0\,, \quad \text{as}\quad \nu\rightarrow 0\,,
\end{equation*}
which completes the proof.  
\end{proof}

Now we establish the following theorem on the relation between~$\overline{\mathcal{M}}_{E}^{\nu}(\omega)$ and~$\widetilde{\mathcal{M}}_{ER}(\psi^\nu\omega)$.

\begin{theorem}\label{thm:app1}
Suppose $f\in C^{1}(L^{2}(D), L^{2}(D))$ and $N>0$ large enough. Then for any $R>0$ 
\begin{equation*}
\lim_{\nu\rightarrow 0} {\rm dist}_{E}\left(\widetilde{\mathcal{M}}_{ER}(\psi^{\nu}\omega), \overline{\mathcal{M}}_{E}^{\nu}(\omega)\right)=0\,.
\end{equation*}
\end{theorem}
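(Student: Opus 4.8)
The plan is to show that every point on $\widetilde{\mathcal{M}}_{ER}(\psi^{\nu}\omega)$ is close, in the $E$-norm, to some point on $\overline{\mathcal{M}}^{\nu}_{E}(\omega)$, uniformly over the bounded parameter set $\{\|\zeta\|_{L^{2}(D)}\le R\}$. A point of $\widetilde{\mathcal{M}}_{ER}(\psi^{\nu}\omega)$ has the form $(\tilde u_{0},\tilde u^{\nu}_{t}(0,\tilde u_{0},\omega))$ where $\tilde u_{0}=\zeta+h(\zeta,\psi^{\nu}\omega)$ and $\tilde u^{\nu}$ solves the \rds~(\ref{e:RHE-u-tilde}). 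I would first observe that $\tilde u^{\nu}$ together with $\tilde w^{\nu}:=\nu\tilde u^{\nu}_{t}$ is \emph{almost} a solution of the transformed \swe~system~(\ref{e:U-nu-REE}) written through the change of variables~(\ref{e:u-v-nu}): substituting $\nu u^{\nu}_{tt}=\tilde w^{\nu}_{t}$ into the wave equation, the heat-manifold curve $(\tilde u^{\nu},\bar v^{\nu})$ (with $\bar v^{\nu}$ defined from $\tilde u^{\nu}_{t}$ via~(\ref{e:u-v-nu})) satisfies~(\ref{e:U-nu-REE}) up to an inhomogeneous error term of size $\mathcal{O}(\nu\tilde u^{\nu}_{tt})$. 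This is exactly the quantity controlled by Lemma~\ref{lem:u-tt}.

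The core of the argument is then a Lyapunov--Perron comparison. Both $\overline{\mathcal{M}}^{\nu}_{E}(\omega)$ and the curve coming from $\widetilde{\mathcal{M}}_{ER}(\psi^{\nu}\omega)$ are characterized as fixed points of the integral operator built from $e^{Ct}$, the projections $P_{1},P_{-1},P_{22}$, and the nonlinearity $F$, in the space $C^{-}_{\eta,E}$; the only difference is that the heat curve picks up the extra forcing term $\mathcal{O}(\nu\tilde u^{\nu}_{tt})$ in the $E_{22}$ (and part of the $E_{-1}$) component. Because the spectral gap condition~(\ref{e:gapcondition}) holds for all small $\nu$ with a Lipschitz constant for $F$ bounded independently of $\nu$ (as verified in section~\ref{sec:RIM-SWE}, $\|F(\bar U_{1})-F(\bar U_{2})\|_{E}\le 3L_{f}\|\bar U_{1}-\bar U_{2}\|_{E}$), the fixed-point operator is a uniform contraction on $C^{-}_{\eta,E}$ with a $\nu$-independent contraction constant. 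A standard perturbation-of-fixed-points estimate then gives, for the corresponding solutions on the two manifolds started from the same $E_{1}$-datum $\xi=\zeta$,
\begin{equation*}
\bigl|\bar U^{\nu}(\cdot,\omega)-(\tilde u^{\nu},\bar v^{\nu})(\cdot,\omega)\bigr|_{C^{-}_{\eta,E}}
\le \frac{C}{1-\theta}\,\Bigl|\,e^{-\eta t}\,\nu\,\tilde u^{\nu}_{tt}(t,\zeta+h(\zeta,\psi^{\nu}\omega),\omega)\Bigr|_{\sup_{t\le0}},
\end{equation*}
where $\theta<1$ is the $\nu$-uniform contraction constant. Evaluating at $t=0$ controls the distance in $E$ between the two manifold points with the same base point, and taking the supremum over $\|\zeta\|_{L^{2}(D)}\le R$ (using that $\widetilde{\mathcal{M}}_{ER}$ is a bounded set and the a priori bounds~(\ref{e:bar-U-bound}),~(\ref{e:bar-d-U-bounded}) on $\bar U^{\nu}$, plus the analogous bounds for $\tilde u^{\nu}$ and the Lipschitz/$C^{1}$ regularity of $h$) keeps everything uniform. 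The right-hand side tends to $0$ almost surely as $\nu\to0$ by Lemma~\ref{lem:u-tt}, which yields $\mathrm{dist}_{E}\bigl(\widetilde{\mathcal{M}}_{ER}(\psi^{\nu}\omega),\overline{\mathcal{M}}^{\nu}_{E}(\omega)\bigr)\to0$.

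The main obstacle I anticipate is bookkeeping around the change of variables and the two different function spaces: one must check carefully that the heat-manifold curve, once rewritten in the $(\bar u^{\nu},\bar v^{\nu})$ coordinates and split along $E=E_{1}\oplus E_{-1}\oplus E_{22}$, really does solve the same integral equation as $\bar U^{\nu}$ except for a forcing term that is genuinely of order $\nu\tilde u^{\nu}_{tt}$ in the $E$-norm (here the identities~(\ref{norm1})--(\ref{norm2}) relating the new inner product to $\|\cdot\|_{L^{2}(D)}$ are essential, since $\nu\tilde u^{\nu}_{tt}$ only controls the second component). A secondary point of care is that the projections and semigroup bounds~(\ref{e:1})--(\ref{e:3}) are $\nu$-dependent through $\lambda_{N}^{\pm}$, so one should fix $N$ first (large enough for Theorem~\ref{thm:RIMSHE}) and then let $\nu\to0$, checking that the constants $K$, $\alpha-\eta$, $\eta-\beta$ entering the contraction estimate stay bounded away from the bad values — which they do, since as $\nu\to0$ one has $\lambda_{N}^{+}\to-N^{2}$ and $\lambda_{N+1}^{+}\to-(N+1)^{2}$, so the gap stabilizes. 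Apart from these two issues the argument is a routine Gronwall/fixed-point perturbation estimate.
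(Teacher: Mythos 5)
Your proposal follows essentially the same route as the paper: lift the heat-manifold curve to $E$ via the change of variables~(\ref{e:u-v-nu}), observe it solves the wave \rds~(\ref{e:U-nu-REE}) up to the forcing $(0,\nu\tilde{u}^{\nu}_{tt})$, match the finite-dimensional $E_{1}$-component of the comparison solution on $\overline{\mathcal{M}}^{\nu}_{E}(\omega)$, and run the Lyapunov--Perron integral estimate in $C^{-}_{\eta,E}$ so that Lemma~\ref{lem:u-tt} kills the error as $\nu\rightarrow 0$. The paper writes this as an explicit Gronwall-type integral inequality rather than an abstract fixed-point perturbation bound, but the argument is the same.
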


 
\begin{proof}
We adapt the discussion for the deterministic case~\cite{ChowLu89}.

Let  any element $(\tilde{u}_{0}, \tilde{u}^{\nu}_{t}(0, \tilde{u}_{0}))\in \widetilde{M}_{ER}(\psi^{\nu}\omega)$  with $\tilde{u}^{\nu}$ satisfying equation~(\ref{e:RHE-u-tilde}) with initial condition $\tilde{u}^{\nu}(0)=\tilde{u}_{0}$\,. 
Define
\begin{eqnarray*}
\tilde{U}^{\nu}(t)=\left(\tilde{u}^{\nu}(t),
\frac{1}{2}\tilde{u}^{\nu}(t)+\nu\tilde{u}^{\nu}_t(t)\right),
\end{eqnarray*}
then $\tilde{U}^{\nu}=(\tilde{u}^{\nu}, \tilde{v}^{\nu})$ satisfies
\begin{eqnarray*}
\dot{\tilde{U}}^{\nu}(t,\omega)=C\tilde{U}^{\nu}(t,\omega)+
\begin{bmatrix}
  0 \\
  f(\tilde{u}^{\nu}(t,\omega)+z^{*\nu}(t,\omega)) \\
\end{bmatrix}
+\begin{bmatrix}
   0 \\
  \nu \tilde{u}^{\nu}_{tt}(t,\omega) \\
 \end{bmatrix}.
\end{eqnarray*}
Let $\bar{U}^{\nu}\in \overline{\mathcal{M}}_{E}^{\nu}(\omega)$ be a solution of the \rds~(\ref{e:U-nu-REE}) and $0<\nu<\nu_0$\,. Let 
\begin{equation*}
\hat{U}^{\nu}(t,  \omega)=\tilde{U}^{\nu}(t, \omega)-\bar{U}^{\nu}(t, \omega).
\end{equation*}
Hence, $\hat{U}^{\nu}(t,  \omega)$ satisfies 
\begin{eqnarray*}
 \dot{\hat{U}}^{\nu}(t, \omega)
&=&C\hat{U}^{\nu}(t, \omega)+\begin{bmatrix}
   0 \\
  \nu \tilde{u}^{\nu}_{tt}(t, \omega) \\
 \end{bmatrix}\\&&+{}
\begin{bmatrix}
  0 \\
  f(\tilde{u}^{\nu}(t,  \omega)+z^{*\nu}(t,  \omega))-f(\bar{u}^{\nu}(t, \omega)+z^{*\nu}(t, \omega)) \\
\end{bmatrix}.
\end{eqnarray*}

Notice that by the Lipschitz property of~$f$ and by Lemma~\ref{lem:u-tt}, 
\begin{equation*}
\Delta \tilde{u}^{\nu}(t,  \omega)=\tilde{u}^{\nu}_{t}(t, \omega)-f(\tilde{u}^{\nu}(t, \omega)+z^{*\nu}(t, \omega))\in C_{\eta, L^{2}(D)}^{-}\,.
\end{equation*}
 Then by the interpolation between~$H^{2}(D)$ and~$L^{2}(D)$,
 \begin{equation*}
 \tilde{u}^{\nu}\in C_{\eta, H_{0}^{1}}^{-}\,.
 \end{equation*}
By Lemma~\ref{lem:u-tt} we have $\hat{U}^{\nu}\in C_{\eta,E}^-$, then by the construction of solution in~$C_{\eta, E}^{-}$,  
\begin{eqnarray*}
\hat{U}^{\nu}(t)&=&e^{Ct}P\hat{U}^{\nu}(0)
\\&&{}
+\int_0^t e^{C(t-s)}P\left\{
\begin{bmatrix}
  0 \\
  f(\tilde{u}^{\nu}+z^{*\nu})-f(\bar{u}^{\nu}+z^{*\nu}) \\
\end{bmatrix}
+\begin{bmatrix}
   0 \\
  \nu \tilde{u}^{\nu}_{tt} \\
 \end{bmatrix}\right\}ds\\&&
 +\int_{-\infty}^t e^{C(t-s)}Q\left\{
\begin{bmatrix}
  0 \\
  f(\tilde{u}^{\nu}+z^{*\nu})-f(\bar{u}^{\nu}+z^{*\nu}) \\
\end{bmatrix}
+\begin{bmatrix}
   0 \\
  \nu \tilde{u}^{\nu}_{tt} \\
 \end{bmatrix}\right\}ds.
\end{eqnarray*}
Since $PE=E_1$ is of finite dimension, we can choose $\bar{u}(0)$~and~$\bar{u}_t(0)$
such that $P\hat{U}(0)=0$. Hence,
\begin{align*}
&e^{-\eta t}\|\hat{U}^{\nu}(t)\|_E
\\&
\leq e^{-\eta
t}\int_t^0e^{\lambda_N^+(t-s)}\left\{\left\|
\begin{bmatrix}
  0 \\
  f(\tilde{u}^{\nu}+z^{*\nu})-f(\bar{u}^{\nu}+z^{*\nu}) \\
\end{bmatrix}\right\|_E
+\left\|\begin{bmatrix}
   0 \\
  \nu \tilde{u}^{\nu}_{tt} \\
 \end{bmatrix}\right\|_E\right\}\,ds
 \\&\quad{}
 +e^{-\eta t}\int_{-\infty}^t e^{\lambda_{N+1}^+(t-s)}\left\{\left\|
\begin{bmatrix}
  0 \\
  f(\tilde{u}^{\nu}+z^{*\nu})-f(\bar{u}^{\nu}+z^{*\nu}) \\
\end{bmatrix}\right\|_E
+\left\|\begin{bmatrix}
   0 \\
  \nu \tilde{u}^{\nu}_{tt} \\
 \end{bmatrix}\right\|_E\right\}\,ds
 \\
  &\leq 3L_f\int_t^0e^{(\lambda_N^+-\eta)(t-s)}\|
\hat{U}^{\nu}\|_{C_{\eta, E}^-}\,ds
+3L_f\int_{-\infty}^te^{(\lambda_{N+1}^+-\eta)(t-s)}\|
\hat{U}^{\nu}\|_{C_{\eta,E}^-}\,ds
\\&\quad{}
+\nu e^{-\eta
t}\int_t^0e^{\lambda_N^+(t-s)}\|\tilde{u}^{\nu}_{tt}\|\,ds
   +\nu e^{-\eta
t}\int_{-\infty}^te^{\lambda_{N+1}^+(t-s)}\|\tilde{u}^{\nu}_{tt}\|\,ds.
\end{align*}
Then by Lemma~\ref{lem:u-tt},  for $N$~large enough we have 
$\|\hat{U}^{\nu}\|_{C_{\eta, E}^{-}}\rightarrow 0$\,, as $\nu\rightarrow 0$\,.
Hence, $\|\hat{U}^{\nu}(0)\|\rightarrow 0$\,,  as $\nu\rightarrow 0$\,.
The proof is complete.
\end{proof}

Next we show the approximation of the random dynamics~$u^{\nu}$. For this  we define the random sets
\begin{equation*}
\overline{\mathcal{M}}^{\nu}_{L^{2}(D)}(\omega)=
\left\{ \bar{u}: \bar{U}=(\bar{u}, \bar{v})\in\overline{\mathcal{M}}^{\nu}_{E} (\omega)\quad \text{for some }   \bar{v}\in L^{2}(D)\right\}
\end{equation*}
and 
\begin{equation*}
\overline{\mathcal{M}}^{\nu}_{L^{2}(D),R}(\omega)=\left\{ \bar{u}\in\overline{\mathcal{M}}^{\nu}_{L^{2}(D)}(\omega): \bar{U}=(\bar{u}, \bar{v})=\xi+h^{\nu}(\xi, \omega),\    \|\xi\|_{L^{2}(D)}\leq R\right\}.
\end{equation*}
We next prove that  for small parameter $\nu>0$\,,  $\overline{\mathcal{M}}^{\nu}_{L^{2}(D), R}(\omega)$ is approximated by~$\widetilde{\mathcal{M}}_{L^{2}(D)}(\psi^{\nu}\omega)$.  
\begin{theorem}\label{thm:main}
Suppose $f\in C^{1}(L^{2}(D), L^{2}(D))$ and $N>0$ large enough. Then for any $R>0$ 
\begin{equation*}
\lim_{\nu\rightarrow 0} {\rm dist}_{L^{2}(D)}\left(\overline{\mathcal{M}}_{L^{2}(D), R}^{\nu}(\omega), \widetilde{\mathcal{M}}_{L^{2}(D)}(\psi^{\nu}\omega) \right)=0\,.
\end{equation*}
\end{theorem}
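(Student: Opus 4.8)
The plan is to deduce the $L^2(D)$ approximation from the $E$-approximation already obtained in Theorem~\ref{thm:app1}, by simply reading off the first component of the relevant elements in~$E$ and using the norm comparison~(\ref{norm2}). Concretely, let $\bar u\in\overline{\mathcal{M}}^{\nu}_{L^2(D),R}(\omega)$ be arbitrary, so $\bar U=(\bar u,\bar v)=\xi+h^{\nu}(\xi,\omega)\in\overline{\mathcal{M}}^{\nu}_{E}(\omega)$ with $\|\xi\|_{L^2(D)}\le R$. First I would use Theorem~\ref{thm:app1} to pick an element $\tilde U^{\nu}=(\tilde u_0,\tilde u^{\nu}_t(0,\tilde u_0))\in\widetilde{\mathcal{M}}_{ER}(\psi^{\nu}\omega)$ with $\|\bar U-\tilde U^{\nu}\|_E\to 0$ as $\nu\to 0$; here I must make sure the quantifiers line up, i.e.\ that ${\rm dist}_E$ in Theorem~\ref{thm:app1} is the Hausdorff-type distance that controls every point of one manifold by a point of the other in the direction needed (possibly the roles of the two manifolds must be swapped, which only requires re-running the argument of Theorem~\ref{thm:app1} with $\bar U^{\nu}$ as the given solution and $\tilde U^{\nu}$ chosen to match its $E_1$-component; the finite-dimensionality of $E_1$ again lets us kill the projection onto $PE$). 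Then by~(\ref{norm2}), for $\nu$ small enough that $\tfrac14-\nu(N+1)^2>0$,
\begin{equation*}
\|\bar u-\tilde u_0\|_{L^2(D)}\le\frac{1}{\sqrt{\tfrac14-\nu(N+1)^2}}\,\|\bar U-\tilde U^{\nu}\|_E\longrightarrow 0\,,\quad \nu\to 0\,.
\end{equation*}

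The only genuine issue is that the $L^2(D)$-manifold on the right-hand side of the claimed limit is $\widetilde{\mathcal{M}}_{L^2(D)}(\psi^{\nu}\omega)$ (the full, unbounded manifold), while Theorem~\ref{thm:app1} produces the first component of a point of $\widetilde{\mathcal{M}}_{ER}(\psi^{\nu}\omega)$, whose first component lies in $\widetilde{\mathcal{M}}_{L^2(D)}(\psi^{\nu}\omega)$ by the very definition~(\ref{e:M_eR}) of $\widetilde{\mathcal{M}}_{ER}$. So $\tilde u_0\in\widetilde{\mathcal{M}}_{L^2(D)}(\psi^{\nu}\omega)$ automatically, and the display above says precisely that every point of $\overline{\mathcal{M}}^{\nu}_{L^2(D),R}(\omega)$ is within a vanishing distance of $\widetilde{\mathcal{M}}_{L^2(D)}(\psi^{\nu}\omega)$. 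Taking the supremum over $\bar u\in\overline{\mathcal{M}}^{\nu}_{L^2(D),R}(\omega)$ — which is legitimate because the bound on $\|\bar U-\tilde U^{\nu}\|_E$ coming from the proof of Theorem~\ref{thm:app1} is uniform over $\|\xi\|_{L^2(D)}\le R$ (it is controlled by $R_1,R_2,R_3$ and the Lemma~\ref{lem:u-tt} estimate, all of which are uniform on that ball) — yields
\begin{equation*}
{\rm dist}_{L^2(D)}\left(\overline{\mathcal{M}}^{\nu}_{L^2(D),R}(\omega),\widetilde{\mathcal{M}}_{L^2(D)}(\psi^{\nu}\omega)\right)\longrightarrow 0\,.
\end{equation*}

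The step I expect to need the most care is verifying this \emph{uniformity in $\xi$} of the convergence in Theorem~\ref{thm:app1}: the proof there fixes one element $(\tilde u_0,\tilde u^{\nu}_t(0,\tilde u_0))$ and bounds $\|\hat U^{\nu}\|_{C^-_{\eta,E}}$ by a constant times $\nu\sup_{t\le0}\|e^{-\eta t}\tilde u^{\nu}_{tt}\|$ plus contraction terms; to pass to the distance between manifolds I would re-examine that estimate and observe that all the tempered random variables involved ($R_1,R_2,R_3$, the constant $C(\omega)$ in Lemma~\ref{lem:u-tt}) depend on $\omega$ but not on the particular $\xi$ in the ball $\|\xi\|_{L^2(D)}\le R$, so the rate at which $\|\hat U^{\nu}(0)\|_E\to0$ is uniform there. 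A secondary, purely bookkeeping point is to confirm the direction of ${\rm dist}_E$ in Theorem~\ref{thm:app1} matches what is needed here; if it is stated in the opposite direction, the same finite-dimensional-projection trick used in its proof gives the reverse estimate with no new ideas. Once uniformity is in hand, the theorem follows from~(\ref{norm2}) and Theorem~\ref{thm:app1} exactly as displayed above.
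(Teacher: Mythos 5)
Your reduction has a genuine gap at the point you dismiss as ``purely bookkeeping'': the direction of the distance. Theorem~\ref{thm:app1} controls $\operatorname{dist}_E\left(\widetilde{\mathcal{M}}_{ER}(\psi^{\nu}\omega),\overline{\mathcal{M}}^{\nu}_{E}(\omega)\right)$, i.e.\ it starts from a solution $\tilde u^{\nu}$ \emph{on the heat-equation manifold} and matches it by a wave-equation solution; the only $\nu$-dependent forcing in that comparison is $\nu\tilde u^{\nu}_{tt}$, which Lemma~\ref{lem:u-tt} makes small. Theorem~\ref{thm:main} needs the opposite direction: every $\bar u_{0}\in\overline{\mathcal{M}}^{\nu}_{L^{2}(D),R}(\omega)$ must be matched. ``Re-running the argument with the roles swapped'' changes the error term to $\nu\bar u^{\nu}_{tt}$, the second time-derivative of the \emph{wave}-equation solution on its inertial manifold, and there is no analogue of Lemma~\ref{lem:u-tt} for it: from $\nu\bar u^{\nu}_{tt}=-\bar u^{\nu}_{t}+\Delta\bar u^{\nu}+f(\bar u^{\nu}+z^{*\nu})$ this quantity is only $O(1)$, not $o(1)$, so the swapped argument does not close ``with no new ideas'' as you claim. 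This is precisely where the paper's proof does new work: it writes $\bar u^{\nu}_{t}=A\bar u^{\nu}+f(\bar u^{\nu}+z^{*\nu})-\nu\bar u^{\nu}_{tt}$, runs the Lyapunov--Perron comparison directly in $L^{2}(D)$ for $\hat u^{\nu}=\tilde u^{\nu}-\bar u^{\nu}$, and then integrates by parts inside the Duhamel integrals, e.g.
\begin{equation*}
\int_{0}^{t}e^{A(t-s)}P_{N}\bar u^{\nu}_{tt}(s)\,ds
= P_{N}\bar u^{\nu}_{t}(t)-e^{At}P_{N}\bar u^{\nu}_{t}(0)
+\int_{0}^{t}Ae^{A(t-s)}P_{N}\bar u^{\nu}_{t}(s)\,ds\,,
\end{equation*}
so that only $\|\bar u^{\nu}_{t}\|_{C^{-}_{\eta,L^{2}(D)}}$ appears (estimates~(\ref{e:P-N-baru})--(\ref{e:Q-N-baru})); this norm is bounded by the tempered random variable coming from~(\ref{e:bar-d-U-bounded}), and the overall prefactor $\nu$ then kills the term. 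Without this integration by parts, or an equivalent device, your proposal does not prove the theorem.

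The remaining ingredients of your proposal are sound: passing from the $E$-norm to the $L^{2}(D)$-norm via~(\ref{norm2}), the observation that the first component of a point of $\widetilde{\mathcal{M}}_{ER}(\psi^{\nu}\omega)$ lies in $\widetilde{\mathcal{M}}_{L^{2}(D)}(\psi^{\nu}\omega)$ by definition~(\ref{e:M_eR}), and the remark that the tempered bounds are uniform over $\|\xi\|_{L^{2}(D)}\leq R$ (a point the paper itself leaves implicit and which is indeed worth making explicit). But these are the easy parts; the missing estimate on the contribution of $\nu\bar u^{\nu}_{tt}$ is the actual substance of the paper's proof of Theorem~\ref{thm:main}.
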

\begin{proof}
The proof is  similar to that of the deterministic case.  Let $\bar{U}_{0}=(\bar{u}_{0}, \bar{v}_{0})$ with $\bar{u}_{0}\in \overline{\mathcal{M}}^{\nu}_{L^{2}(D), R}(\omega)$ and $\bar{U}^{\nu}(t,  \omega)=(\bar{u}^{\nu}(t, \omega), \bar{v}^{\nu}(t,  \omega))$ be the solution of~(\ref{e:U-nu-REE})  with $\bar{U}^{\nu}(0)=\bar{U}(0)$. By the invariance of~$\overline{\mathcal{M}}^{\nu}_{E}(\omega)$,   
\begin{equation*}
\bar{u}^{\nu}_{t}=A\bar{u}^{\nu}+f(\bar{u}^{\nu}+z^{*\nu})-\nu \bar{u}^{\nu}_{tt}\,.
\end{equation*}
Let $\tilde{u}^{\nu}(t, \omega)$ be the solution of equation~(\ref{e:RHE-u-tilde}) on the random inertial manifold~$\widetilde{\mathcal{M}}_{L^{2}(D)}(\psi^{\nu}\omega)$ with $\tilde{u}^{\nu}(0)=\tilde{u}_{0}\in L^{2}(D)$.
Thus $\hat{u}^{\nu}(t, \omega)=\tilde{u}^{\nu}(t, \omega)-\bar{u}^{\nu}(t, \omega)$
satisfies 
\begin{eqnarray*}
\hat{u}^{\nu}_{t}(t, \omega)&=&A\hat{u}^{\nu}(t, \omega)+f(\tilde{u}^{\nu}(t, \omega)+z^{*\nu}(t, \omega))\\&&{}-f(\bar{u}^{\nu}(t, \omega)+z^{*\nu}(t, \omega))+\nu\bar{u}^{\nu}_{tt}(t,\theta_{-t}\omega).
\end{eqnarray*}
Notice that $\tilde{u}^{\nu}(t, \omega)\in C_{\eta, L^{2}(D)}^{-}$  and $\bar{U}^{\nu}\in\overline{\mathcal{M}}^{\nu}_{E}(\omega)$, we have $\hat{u}^{\nu}\in C_{\eta, L^{2}(D)}^{-}$\,. 
Then,  
\begin{eqnarray*}
\hat{u}^{\nu}(t,
 \omega)&=& e^{At}P_{N}\hat{u}^{\nu}(0)+\int_{0}^{t}e^{A(t-s)}P_{N}[f(\tilde{u}^{\nu}(s, \omega)+z^{*\nu}(s, \omega))
 \\&&{}\qquad-f(\bar{u}^{\nu}(s, \omega)+z^{*\nu}(s, \omega))+\nu\bar{u}_{tt}^{\nu}(s, \omega)]\,ds\\
&&{}+\int_{-\infty}^{t}e^{A(t-s)}(I-P_{N})[f(\tilde{u}^{\nu}(s, \omega)+z^{*\nu}(s, \omega))
\\&&{}\qquad-f(\bar{u}^{\nu}(s, \omega)+z^{*\nu}(s, \omega))+\nu\bar{u}_{tt}^{\nu}(s, \omega)]\,ds\,.
\end{eqnarray*}
We need an estimate on~$\nu\bar{u}^{\nu}_{tt}$.   Suppose we have the following expansion in the orthonormal basis of the eigenfunctions~$e_{k}$ of~$A$, $\bar{u}_{t}^{\nu}=\sum_{k=1}^{\infty}\bar{u}^{\nu}_{t,k} e_{k}$\,.
Then by integration by parts, 
\begin{eqnarray*}
&&\sup_{t\leq 0}e^{-\eta t}\left\|\int_{0}^{t}e^{A(t-s)}P_{N}\bar{u}^{\nu}_{tt}(s)\,ds\right\|_{L^{2}(D)}\\&=&\sup_{t\leq 0}e^{-\eta t}\left\|P_{N}\bar{u}_{t}^{\nu}(t)-e^{At}P_{N}\bar{u}^{\nu}_{t}(0)-\sum_{k=1}^{N}k^{2}\int_{0}^{t}e^{-k^{2}(t-s)}\bar{u}_{t,k}^{\nu}(s) e_{k}\,ds\right\|_{L^{2}(D)}\\
&\leq&\sup_{t\leq 0}e^{-\eta t}\|P_{N}\bar{u}^{\nu}_{t}(t)\|_{L^{2}(D)}+\sup_{t\leq 0}e^{-(\eta +N^{2})t}\|P_{N}\bar{u}^{\nu}_{t}(0)\|_{L^{2}(D)}
\\&&{}
 +\sup_{t\leq 0}e^{-\eta t}\left\|\sum_{k=1}^{N}k^{2}\int_{0}^{t}e^{-k^{2}(t-s)}\bar{u}_{t,k}^{\nu}(s) e_{k}\,ds\right\|_{L^{2}(D)}\,.
\end{eqnarray*}
For the last term in the above equation,  we consider its square as
\begin{eqnarray*}
&&\sup_{t\leq 0}e^{-2\eta t}\left\|\sum_{k=1}^{N}k^{2}\int_{0}^{t}e^{-k^{2}(t-s)}\bar{u}_{t,k}^{\nu}(s) e_{k}\,ds\right\|^{2}_{L^{2}(D)}\\
&=&\sup_{t\leq 0}\sum_{k=1}^{N} \left[k^{2}\int_{0}^{t}e^{-k^{2}(t-s)}e^{-\eta(t-s)}e^{-\eta s}\|\bar{u}_{t,k}^{\nu}(s)\|_{L^{2}(D)}\,ds\right]^{2}\\
&\leq &\sup_{t\leq 0}\sum_{k=1}^{N} \left[k^{2}\int_{0}^{t}e^{-k^{2}(t-s)}e^{-\eta(t-s)}\,ds\right]^{2}\|\bar{u}^{\nu}_{t,k}\|^{2}_{C_{\eta,\R}}\\
&\leq&\|P_{N}\bar{u}^{\nu}_{t}\|^{2}_{C_{\eta, L^{2}(D)}^{-}} \,,
\end{eqnarray*}
where we use  that $\eta<-N^{2}$. Then  
\begin{equation}\label{e:P-N-baru}
\sup_{t\leq 0}e^{-\eta t}\left\|\int_{0}^{t}e^{A(t-s)}P_{N}\bar{u}^{\nu}_{tt}(s)\,ds\right\|_{L^{2}(D)}\leq 3\|P_{N}\bar{u}^{\nu}_{t}\|_{C_{\eta, L^{2}(D)}^{-}}\,.
\end{equation}
For the higher modes of~$\bar{u}^{\nu}
_{t}$, similarly we have 
\begin{eqnarray}
&&\sup_{t\leq 0}e^{-\eta t}\left\|\int_{-\infty}^{t}e^{A(t-s)}(I-P_{N})\bar{u}^{\nu}_{tt}(s)\,ds\right\|_{L^{2}(D)}\nonumber\\
&\leq&3\sup_{t\leq 0}e^{-\eta t}\|(I-P_{N})\bar{u}^{\nu}_{t}(t)\|_{L^{2}(D)} \nonumber\\
&\leq&3\|(I-P_{N})\bar{u}^{\nu}_{t}\|_{C_{\eta, L^{2}(D)}^{-}}  \,.\label{e:Q-N-baru}
\end{eqnarray}
Since $\bar{U}^{\nu}_{t}=(\bar{u}^{\nu}_{t}, \bar{v}^{\nu}_{t})\in C_{\eta, E}^{-}$, and by the same discussion as that for Theorem~\ref{thm:app1}, we have 
\begin{equation}
|\hat{u}^{\nu}|_{C_{\eta, L^{2}(D)}^{-}}\rightarrow 0\,,\quad \text{as}\quad \nu\rightarrow 0\,.
\end{equation}
This completes the proof.
\end{proof}

\begin{remark}
As stationary solutions lie on a random invariant manifold,  the above approximation for a random invariant manifold 
implies that the distribution of stationary solutions 
to both \textsc{swe}~(\ref{e:SWE1}) and \textsc{she}~(\ref{e:SHE1}) coincide with each other.  This coincidence was also shown by Cerrai and Freidlin~\cite{CF06} under certain conditions.
\end{remark}


\appendix
\section{Stationary solutions of linear SWEs and estimates}\label{sec:App-A}
We give some estimates on the stationary solution$(z^{*\nu}, z^{*\nu}_{t})$ to the linear \swe~(\ref{e:LSWEs}) and the stationary solution~$z^{*}$ to linear hear equation~(\ref{e:eta}). 

The following theorem is classical~\cite{CF06}.
\begin{theorem}
The stationary solution~$z^{*\nu}$ is Gaussian with normal distribution~$\mathcal{N}\left(0, \tfrac12 A^{-1}Q\right)$ in~$L^{2}(D)$, which is also the distribution of~$z^{*}$ in the space~$L^{2}(D)$. 
\end{theorem}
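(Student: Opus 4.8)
The plan is to reduce both halves of the statement to the eigenbasis $\{e_k\}$ of $-A$ and to explicit Ornstein--Uhlenbeck processes --- a planar one in the wave case and a scalar one in the heat case. Write $Qe_k=q_ke_k$; this commutativity of $Q$ and $A$ is implicit in the statement, since $\tfrac12A^{-1}Q$ is a symmetric operator only if $AQ=QA$. Correspondingly decompose $W(t)=\sum_{k\geq1}\sqrt{q_k}\,\beta_k(t)e_k$ with $\{\beta_k\}_{k\geq1}$ independent standard real Brownian motions, so that each Fourier mode evolves independently.

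First I would treat the heat equation. The stationary solution of~\eqref{e:eta} is $z^{*}(\omega)=\int_{-\infty}^{0}e^{-As}\,dW(s)$, a centred Gaussian element of $L^{2}(D)$ with covariance operator $\int_{0}^{\infty}e^{sA}Qe^{sA}\,ds$; the $e_k$-component has variance $q_k\int_{0}^{\infty}e^{-2\lambda_ks}\,ds=q_k/(2\lambda_k)$, so the covariance operator equals $\tfrac12(-A)^{-1}Q$, that is $\tfrac12A^{-1}Q$ in the sign convention of the statement, and it is trace class because $\sum_kq_k/(2\lambda_k)\leq(\operatorname{Tr}Q)/(2\lambda_1)<\infty$.

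Next I would treat the wave equation. Writing~\eqref{e:LSWEs} as the first-order system $\dot z^{\nu}=w^{\nu}$, $\nu\dot w^{\nu}=\Delta z^{\nu}-w^{\nu}+\dot W$ and taking Fourier coefficients $p_k=\langle z^{\nu},e_k\rangle$, $r_k=\langle w^{\nu},e_k\rangle$, each pair $(p_k,r_k)$ solves the planar linear SDE with drift matrix $B_k=\bigl(\begin{smallmatrix}0&1\\-\lambda_k/\nu&-1/\nu\end{smallmatrix}\bigr)$ and diffusion vector $(0,\sqrt{q_k}/\nu)^{\top}$. The eigenvalues of $B_k$ are precisely the numbers $\lambda_k^{\pm}=(-1\pm\sqrt{1-4\nu\lambda_k})/(2\nu)$ from Section~\ref{sec:RIM-SWE}, which for every $\nu>0$ and $k\geq1$ have strictly negative real part; hence $B_k$ is Hurwitz, $(p_k,r_k)$ possesses a unique stationary Gaussian law $\mathcal N(0,\Sigma_k^{\nu})$, and $\Sigma_k^{\nu}$ is the unique solution of the Lyapunov equation $B_k\Sigma_k^{\nu}+\Sigma_k^{\nu}B_k^{\top}+\operatorname{diag}(0,q_k/\nu^{2})=0$. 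Solving this $2\times2$ system entrywise yields $\Sigma_k^{\nu}=\operatorname{diag}\!\bigl(q_k/(2\lambda_k),\,q_k/(2\nu)\bigr)$. The key observation is that the position--position entry $q_k/(2\lambda_k)$ is \emph{independent of $\nu$} and coincides with the $k$th eigenvalue of the covariance of $z^{*}$; since the modes are independent, $z^{*\nu}=\sum_kp_ke_k$ is a centred Gaussian element of $L^{2}(D)$ with covariance $\operatorname{diag}(q_k/(2\lambda_k))=\tfrac12A^{-1}Q$ (trace class by the same bound), and two Gaussian measures on a separable Hilbert space with equal mean and covariance coincide, giving the asserted equality of laws.

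The chief obstacle I anticipate is organisational rather than substantial: one must justify carefully that the stationary law of the full infinite-dimensional system is genuinely the product over modes of the one- and two-dimensional stationary laws (independence of the driving $\beta_k$, and summability of $\{\Sigma_k^{\nu}\}$ in trace norm), and must confirm that the algebraic solution of the $2\times2$ Lyapunov equation really is the \emph{stationary} covariance --- which is where the Hurwitz property of $B_k$ for all $\nu>0$, $k\geq1$, is used. It is worth recording, in passing, that the velocity--velocity entry $q_k/(2\nu)$ blows up as $\nu\to0$; this is exactly the analytic reason, flagged in the Introduction, why $z^{*\nu}$ --- but not the pair $(z^{*\nu},z^{*\nu}_t)$ --- admits a $\nu$-independent distribution.
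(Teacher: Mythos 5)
Your proposal is correct, but it is worth noting that the paper does not actually prove this theorem at all: it simply states ``The following theorem is classical'' and cites Cerrai \& Freidlin \cite{CF06}, where the invariant measure of the linear damped wave equation is computed. What you have written is therefore a self-contained verification of the cited fact, and it follows essentially the classical route: diagonalise in the eigenbasis of $A$, reduce (\ref{e:LSWEs}) to independent planar Ornstein--Uhlenbeck systems, solve the $2\times2$ Lyapunov equation per mode to get $\Sigma_k^{\nu}=\operatorname{diag}\bigl(q_k/(2\lambda_k),\,q_k/(2\nu)\bigr)$, and compare the position marginal with the scalar stationary variance $q_k/(2\lambda_k)$ coming from (\ref{e:eta}). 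Your algebra checks out (the characteristic roots of $B_k$ are exactly the $\lambda_k^{\pm}$ of Section~\ref{sec:RIM-SWE} and are Hurwitz for every $\nu>0$), and the two points you flag as needing care --- the commutativity $AQ=QA$, without which the mode decoupling and indeed the very formula $\tfrac12A^{-1}Q$ fail, and the trace-class summability needed to pass from modewise stationary laws to a genuine Gaussian measure on $L^{2}(D)$ --- are precisely the hypotheses the paper leaves implicit (note also the sign: the covariance is $\tfrac12(-A)^{-1}Q$, as you observe). Your closing remark that the velocity variance $q_k/(2\nu)$ diverges as $\nu\to0$ is consistent with the estimate $\nu^{2}\mathbb{E}\|z^{*\nu}_{t}\|^{2}=O(\nu)\operatorname{Tr}Q$ recorded later in Appendix~\ref{sec:App-A}, and correctly explains why only the position component, not the pair $(z^{*\nu},z^{*\nu}_{t})$, has a $\nu$-independent law. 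In short: no gap; your argument supplies the proof the paper outsources, at the cost of the (reasonable) extra standing assumption that $Q$ is diagonal in the eigenbasis of $A$.
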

We consider~$z^{*\nu}$ and~$z^{*}$ on the canonical probability space~$(\Omega_{0}, \mathcal{F}_{0}, \mathbb{P})$ and the Wiener shift~$\{\theta_{t}\}_{t\in\R}$\,.  Then as   stationary solutions to  stochastic equations,  we  write $z^{*\nu}(t)=z^{*\nu}(t,\omega)=z^{*\nu}(\theta_{t}\omega)$ and $z^{*}(t)=z^{*}(t,\omega)=z^{*}(\theta_{t}\omega)$. 
\begin{theorem}
The processes~$z^{*\nu}(t,\omega)$ and~$z^{*}(t,\omega)$  satisfy 
\begin{equation*}
\lim_{t\rightarrow\pm\infty}\frac{1}{t}\|z^{*\nu}(t,\omega)\|_{L^{2}(D)}=\lim_{t\rightarrow\pm\infty}\frac{1}{t}\|z^{*}(t,\omega)\|_{L^{2}(D)}=0
\end{equation*}
for almost all $\omega\in\Omega$\,.
\end{theorem}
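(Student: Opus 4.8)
The plan is to exploit that $z^{*\nu}$ and $z^{*}$ are \emph{stationary} processes over the ergodic metric dynamical system $(\Omega,\mathcal F,\mathbb P,\{\theta_t\})$ with finite moments, for which sublinear growth along $\theta_t$ is a standard consequence of Birkhoff's ergodic theorem. I will carry out the argument for $z^{*\nu}$; the one for $z^{*}$ is identical (and slightly easier, since $z^{*}(t,\omega)=\int_{-\infty}^{t}e^{A(t-s)}\,dW(s)$ is a stochastic convolution with continuous $L^{2}(D)$-valued paths under $\operatorname{Tr}Q<\infty$).

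First I would record the moment bound. By the previous theorem $z^{*\nu}(\omega)$ is Gaussian with law $\mathcal N(0,\tfrac12 A^{-1}Q)$ in $L^{2}(D)$, and $\operatorname{Tr}(A^{-1}Q)\le\lambda_{1}^{-1}\operatorname{Tr}Q<\infty$ by~(\ref{e:Q}); hence $\EX\|z^{*\nu}\|^{p}<\infty$ for every $p\ge1$. Since $t\mapsto z^{*\nu}(\theta_{t}\omega)$ has continuous $L^{2}(D)$-valued paths (the position component of the stationary solution of the linear damped wave equation~(\ref{e:LSWEs})), I set
\[
Y^{\nu}(\omega):=\sup_{0\le s\le 1}\|z^{*\nu}(\theta_{s}\omega)\|_{L^{2}(D)},
\]
and claim $\EX Y^{\nu}<\infty$. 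This follows from the fact that $s\mapsto z^{*\nu}(\theta_{s}\omega)$ is a continuous Gaussian process on $[0,1]$ with uniformly (in $s$) bounded marginal variances, via Borell's inequality for suprema of Gaussian processes (equivalently, via a direct factorisation/maximal estimate on the stochastic convolution defining $z^{*\nu}$), which controls all moments of $Y^{\nu}$. This passage from pointwise finite moments to a finite moment of the supremum over a unit time window is the only step requiring some care, and is the main technical point.

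Next I would invoke Birkhoff's ergodic theorem: since $\theta_{1}$ preserves $\mathbb P$ and is ergodic and $Y^{\nu}\in L^{1}(\Omega)$,
\[
\frac{1}{n}\sum_{k=0}^{n-1}Y^{\nu}(\theta_{k}\omega)\longrightarrow \EX Y^{\nu}<\infty\qquad\text{for a.a. }\omega ,
\]
and consequently $\tfrac1n Y^{\nu}(\theta_{n}\omega)\to 0$ a.s., being the difference of two consecutive Cesàro means times a bounded factor. Finally, for $t>0$ write $t=n+r$ with $n=\lfloor t\rfloor$ and $r\in[0,1)$; then $z^{*\nu}(\theta_{t}\omega)=z^{*\nu}(\theta_{r}\theta_{n}\omega)$, so $\|z^{*\nu}(\theta_{t}\omega)\|_{L^{2}(D)}\le Y^{\nu}(\theta_{n}\omega)$, whence $\tfrac1t\|z^{*\nu}(\theta_{t}\omega)\|_{L^{2}(D)}\le\tfrac1n Y^{\nu}(\theta_{n}\omega)\to0$ as $t\to+\infty$. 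For $t\to-\infty$ I repeat the argument with the (also measure-preserving and ergodic) map $\theta_{-1}$, using $\frac1n\sum_{k=0}^{n-1}Y^{\nu}(\theta_{-k}\omega)$. Running the identical computation for $z^{*}$, whose one-dimensional marginals carry the same law $\mathcal N(0,\tfrac12 A^{-1}Q)$, and taking the union of the finitely many exceptional null sets completes the proof.

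As an alternative that bypasses the ergodic theorem entirely: by stationarity $\EX\sup_{t\in[n,n+1]}\|z^{*\nu}(t)\|_{L^{2}(D)}^{2p}$ is a constant independent of $n$, so Chebyshev's inequality gives $\mathbb P\{\sup_{t\in[n,n+1]}\|z^{*\nu}(t)\|_{L^{2}(D)}>\varepsilon n\}=O(n^{-2p})$, which is summable for $p>\tfrac12$, and Borel--Cantelli then yields $\tfrac1t\|z^{*\nu}(t,\omega)\|_{L^{2}(D)}\to0$ a.s.; the same works for $z^{*}$ and for $t\to-\infty$.
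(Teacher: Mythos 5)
Your argument is correct and is essentially the paper's own proof: the paper simply invokes the standard result for the scalar case from~\cite{DuanLuSch03}, and the argument there is exactly the one you spell out (stationarity, integrability of the supremum over a unit time window, and Birkhoff's ergodic theorem applied to $\theta_{\pm 1}$ to force sublinear growth). Your Borel--Cantelli alternative is also valid and equally standard; no gaps.
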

\begin{proof}
The proof is the same as that for scalar systems~\cite{DuanLuSch03}.
\end{proof}

We need an estimate on~$\nu z^{*\nu}_{t}$\,.  Since $\nu^{2}\mathbb{E}\|z^{*\nu}_{t}(t)\|^{2}=\nu \operatorname{Tr} Q\rightarrow 0$\,, as $\nu\rightarrow 0$\,,
then for almost all $\omega\in\Omega$
\begin{equation}\label{e:nu-eta-t-bound}
\nu z^{*\nu}_{t}(t,\omega)\rightarrow 0\quad \text{as}\quad \nu\rightarrow 0\,.
\end{equation}

%
%
%

\section{Rohlin's classification}\label{sec:Rohlin}

\begin{definition}
Two random variables~$X$ and~$Y$ defined on a same probability space~$(\Omega, \mathcal{F}, \mathbb{P})$  are called equivalent if and only if there is a measurable preserving map $\psi:\Omega\rightarrow\Omega$ such that $X(\psi\omega)=Y(\omega)$
for almost all $\omega\in\Omega$\,.
\end{definition}
If~$X$ is equivalent to~$Y$, then $X$ has same distribution as that of~$Y$.  
 Rohlin's result on the classification of the homomorphisms of a Lebesgue space~\cite{Rohlin49} gives an inverse result.  
 
 Recall that a homomorphism~$\psi$ from probability space~$(\Omega_{1}, \mathcal{F}_{1}, \mathbb{P}_{1})$ to probability space~$(\Omega_{2}, \mathcal{F}_{2}, \mathbb{P}_{2})$ is a measurable mapping  such that $\psi\mathbb{P}_{1}=\mathbb{P}_{2}$\,. If $\psi$~is measurably invertible, then $\psi$~is an isomorphism. A 
  probability space~$(\Omega, \mathcal{F}, \mathbb{P})$ is a Lebesgue space~\cite[Appendix~A]{Arn} if this probability space is  isomorphic to a probability space  which is the disjoint union of an at most countable (possibly empty) set~$\{\omega_{1}, \omega_{2}, \ldots \}$ of points each of positive measure and the space~$([0, s), \mathcal{B}, \lambda)$, where $\mathcal{B}$ is the $\sigma$-algebra of Lebesgue measurable subsets of the interval~$[0, s)$ and $\lambda$~is the Lebesgue measure. Here $s=1-\sum p_{n}$ where $p_{n}$~is the measure of the point~$\omega_{n}$.   For a measure space, the signature is the mass of its non-atomic part plus the non-increasing sequence of the weights of its atoms. 
  
 Rohlin's classification theorem on the homomorphisms of Lebesgue space states the following~\cite{Rohlin49}.
 \begin{theorem} 
 A homomorphisms of Lebesgue space is determined by the signature of the quotient measure space  and the signatures of the condition measure spaces associated with the homomorphism. 
 \end{theorem}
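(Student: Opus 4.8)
The plan is to deduce the statement from Rohlin's structure theory of Lebesgue spaces and their measurable partitions. Call two homomorphisms $\psi_i\colon(\Omega_1^{(i)},\mathcal{F}_1^{(i)},\mathbb{P}_1^{(i)})\to(\Omega_2^{(i)},\mathcal{F}_2^{(i)},\mathbb{P}_2^{(i)})$, $i=1,2$, \emph{isomorphic} if there are measure-space isomorphisms (mod $0$) $\alpha\colon\Omega_1^{(1)}\to\Omega_1^{(2)}$ and $\beta\colon\Omega_2^{(1)}\to\Omega_2^{(2)}$ with $\psi_2\circ\alpha=\beta\circ\psi_1$; the claim is that the signature of the quotient together with the fiber-signature function is a complete system of invariants for this relation. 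First I would pass from $\psi$ to the measurable partition $\xi=\xi_\psi$ of $\Omega_1$ into the fibers $\psi^{-1}(\{y\})$ (equivalently, to the complete sub-$\sigma$-algebra $\psi^{-1}\mathcal{F}_2$): on a Lebesgue space the correspondence between homomorphisms up to isomorphism, measurable partitions, and complete sub-$\sigma$-algebras is a bijection, and the quotient measure space of $\psi$ is $(\Omega_1/\xi,\overline{\mathbb{P}})$. Next I would invoke Rohlin's disintegration theorem: a measurable partition of a Lebesgue space carries a canonical system of conditional measures $\{\mathbb{P}_C\}_{C\in\xi}$, each $(C,\mathbb{P}_C)$ is again a Lebesgue space, $C\mapsto\mathbb{P}_C$ is measurable, and $\mathbb{P}_1=\int_{\Omega_1/\xi}\mathbb{P}_C\,d\overline{\mathbb{P}}(C)$. (This is itself proved by reducing, via the normal form $[0,s)\sqcup\{\text{atoms}\}$, to disintegration along one real coordinate and then transporting.) With this in hand, $\operatorname{sig}(\Omega_1/\xi)$ and the measurable function $C\mapsto\operatorname{sig}(C,\mathbb{P}_C)$ are precisely the invariants in the statement.

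For the easy direction, if $(\alpha,\beta)$ intertwines $\psi_1$ and $\psi_2$ then $\alpha$ maps $\xi_{\psi_1}$ onto $\xi_{\psi_2}$, hence by uniqueness of the canonical conditional measures it pushes $\mathbb{P}^{(1)}_C$ to $\mathbb{P}^{(2)}_{\alpha(C)}$ and $\overline{\mathbb{P}}^{(1)}$ to $\overline{\mathbb{P}}^{(2)}$, with $\beta$ realizing the induced quotient map; since signature is a complete invariant for a single Lebesgue space (Rohlin), it follows that $\operatorname{sig}(\Omega_2^{(1)})=\operatorname{sig}(\Omega_2^{(2)})$ and that the fiber-signature functions agree after transport by $\beta$. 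For the converse (the substantive half), assume the quotients have equal signature, fix an isomorphism $q\colon\Omega_1/\xi_1\to\Omega_1/\xi_2$ (identifying the two quotients via $\beta$), and assume $\operatorname{sig}(C,\mathbb{P}^{(1)}_C)=\operatorname{sig}(q(C),\mathbb{P}^{(2)}_{q(C)})$ for $\overline{\mathbb{P}}$-a.e.\ $C$. For each such $C$ the two conditional Lebesgue spaces have equal signature and hence are isomorphic; the task is to choose a fiberwise isomorphism $\Phi_C\colon(C,\mathbb{P}^{(1)}_C)\to(q(C),\mathbb{P}^{(2)}_{q(C)})$ \emph{measurably in} $C$, and then to set $\Phi:=\bigsqcup_C\Phi_C$. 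Because $\mathbb{P}_1=\int\mathbb{P}^{(1)}_C\,d\overline{\mathbb{P}}$ and $\mathbb{P}_2=\int\mathbb{P}^{(2)}_{q(C)}\,d\overline{\mathbb{P}}$ and $\Phi_C$ is measure-preserving on each fiber, the glued map $\Phi$ is automatically a measure-space isomorphism of $\Omega_1$ onto $\Omega_2$, and it intertwines $\psi_1$ with $\psi_2$ by construction; this final bookkeeping is routine once $C\mapsto\Phi_C$ is in hand.

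The main obstacle is exactly the measurable selection of the fiber isomorphisms $\Phi_C$. One must first set everything in a standard-Borel framework: using the normal form of a measurable partition of a Lebesgue space, realize the family $\{(C,\mathbb{P}^{(i)}_C)\}$ as a measurable field of standard spaces over the quotient, and realize candidate fiber isomorphisms as elements of a Polish group of measure-preserving maps; then the set $\{(C,\Phi): \Phi\text{ is an isomorphism }(C,\mathbb{P}^{(1)}_C)\to(q(C),\mathbb{P}^{(2)}_{q(C)})\}$ is Borel, with nonempty sections for a.e.\ $C$ by the signature hypothesis, and the Jankov--von Neumann uniformization theorem yields a universally measurable selector $C\mapsto\Phi_C$. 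Getting these Borel structures right — in particular verifying measurability of $C\mapsto\mathbb{P}^{(i)}_C$ in the appropriate sense and that ``being an isomorphism of the two measured fibers'' is a Borel condition in $(C,\Phi)$ — is where essentially all of Rohlin's work lies; everything else above is formal.
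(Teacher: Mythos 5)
The paper does not prove this statement at all: it is quoted verbatim as Rohlin's classification theorem and supported only by the citation to Rokhlin's memoir, so there is no in-paper argument to match yours against. Judged on its own, your outline is a legitimate modern reconstruction and correctly identifies all the moving parts: reduction of a homomorphism to the measurable partition into fibers (equivalently a complete sub-$\sigma$-algebra), Rohlin's disintegration theorem producing the canonical conditional measures, completeness of the signature as an invariant of a single Lebesgue space for both the quotient and the fibers, and the fact that the entire difficulty is concentrated in choosing the fiberwise isomorphisms $\Phi_C$ measurably in $C$. Your formulation of the invariant as the fiber-signature \emph{function} up to transport by an isomorphism of the quotients, rather than the bare set of fiber signatures, is the correct reading of the (loosely worded) statement.

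Where your route genuinely differs from Rohlin's is the last step. You propose to realize the fibers as a measurable field over the quotient and extract $C\mapsto\Phi_C$ by Jankov--von Neumann uniformization; Rohlin instead proves a \emph{normal form} theorem for a measurable partition: the fibered space is isomorphic mod $0$ to a canonical model built directly from the quotient signature and the fiber-signature function (partition the quotient into the countably many sets on which the atomic part of the fiber has a fixed combinatorial type, and over each piece take the product-like model with fiber $[0,s_C)$ together with the atoms of prescribed weights). Two homomorphisms with the same invariants are then both isomorphic to the same model, and no descriptive-set-theoretic selection is needed. Your approach buys generality and brevity of statement at the price of the hardest technical point being only gestured at: to make $\{(C,\Phi)\}$ a Borel set you must fix a single Polish parametrization of measure isomorphisms between \emph{varying} measured fibers (e.g.\ via couplings in $P(X\times X)$ supported on graphs of bijections mod $0$, after embedding everything in a fixed standard space $X$), verify that "supported on the graph of an invertible map and pushing $\mathbb{P}^{(1)}_C$ to $\mathbb{P}^{(2)}_{q(C)}$" is Borel in $(C,\Phi)$, and then upgrade the universally measurable selector to a Borel one mod $0$. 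None of this fails, but as written it is a sketch of a sketch precisely at the step you yourself call the crux; completing it (or replacing it by the normal-form construction) is required before this counts as a proof rather than a correct plan.
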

 Then we have the following corollary.
 \begin{coro}
 Random variables~$X$ and~$Y$ are equivalent if and only if for almost all values taken by these variables, the condition measure spaces are isomorphic; that is, they have the same  signature. 
 \end{coro}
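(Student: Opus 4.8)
The plan is to apply Rohlin's classification theorem to the two random variables $X$ and $Y$. Recall that $X$ and $Y$ live on the same Lebesgue space $(\Omega, \mathcal{F}, \mathbb{P})$. To each of them associate the quotient measure space $(\Omega/X, \mathcal{F}_X, \mathbb{P}_X)$ obtained by pushing $\mathbb{P}$ forward under $X$ (its distribution law), together with the system of conditional measures on the fibres $X^{-1}(x)$ for $\mathbb{P}_X$-almost every $x$; similarly for $Y$. The ``signature'' in the statement is precisely the invariant that Rohlin's theorem attaches to the homomorphism $\omega \mapsto X(\omega)$ of Lebesgue spaces: the mass of the non-atomic part plus the non-increasing sequence of atom weights, recorded both for the quotient space and for almost every conditional space.

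First I would prove the ``if'' direction. Assume that for almost every value $x$ the conditional measure space $X^{-1}(x)$ is isomorphic to the corresponding conditional space $Y^{-1}(x)$ — equivalently, they have the same signature — and (implicitly, since $X$ and $Y$ take the ``same values'') that the quotient spaces agree. Then by Rohlin's classification theorem the homomorphisms induced by $X$ and by $Y$ are, up to isomorphism, the same object: there exist measure-preserving isomorphisms of the quotient and of each conditional space intertwining them. Gluing these fibrewise isomorphisms — using the measurable dependence of the conditional measures on the base point, which is part of the Rohlin structure of a Lebesgue space — produces a single measure-preserving map $\psi\colon \Omega \to \Omega$ with $X(\psi\omega) = Y(\omega)$ for almost all $\omega$. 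Thus $X$ and $Y$ are equivalent in the sense of the Definition above.

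Conversely, for the ``only if'' direction, suppose $X$ and $Y$ are equivalent, so $X(\psi\omega) = Y(\omega)$ almost surely for some measure-preserving $\psi$. Then $\psi$ carries the quotient and conditional structure of $Y$ onto that of $X$: it induces an isomorphism of the quotient spaces and, for almost every value, an isomorphism of the conditional spaces $Y^{-1}(x) \to X^{-1}(x)$. Isomorphic Lebesgue spaces have the same signature, which gives the claimed equality of signatures. This direction is essentially a direct unwinding of the definition of equivalence together with the fact, recalled just before the statement, that equivalence forces equal distributions (so the quotient spaces match automatically).

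The main obstacle is the measurable gluing step in the ``if'' direction: passing from a family of fibrewise isomorphisms, defined for almost every value $x$, to one globally defined measurable map $\psi$ on $\Omega$. This is exactly where the Lebesgue-space hypothesis is indispensable — it guarantees a measurable disintegration of $\mathbb{P}$ over the quotient and lets one choose the conditional isomorphisms in a jointly measurable way (a measurable selection argument). Everything else is a translation between the language of random variables and the language of homomorphisms of Lebesgue spaces, after which Rohlin's theorem is quoted verbatim.
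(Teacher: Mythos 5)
Your proposal is correct and follows exactly the route the paper intends: the paper states this corollary with no written proof, deriving it immediately from Rohlin's classification theorem, and your argument simply fills in the translation between random variables and homomorphisms of Lebesgue spaces together with the fibrewise gluing over the common quotient (which is the standard content of Rohlin's theory of measurable partitions and conditional measures). The one point worth keeping explicit, which you already flag, is that the ``if'' direction presupposes that $X$ and $Y$ have the same distribution, so that the quotient spaces coincide and the fibrewise isomorphisms can be glued over the identity on the space of values.
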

One special case  for the above condition measure spaces having the same signature is when almost all conditional measures are purely non-atomic. 

  The canonical  probability space~$(\Omega_{0}, \mathcal{F}_{0}, \mathbb{P}_{0})$ is a Lebesgue space~\cite{Arn}.   
Now we consider~$\eta^{*\nu}(\omega)$ and~$\eta^{*}(\omega)$ which have the same distribution  on the same probability space~$(\Omega_{0}, \mathcal{F}_{0}, \mathbb{P}_{0})$.  Moreover, the distribution is Gaussian, so almost all conditional measures are purely non-atomic. Then by the above corollary, there is a measure preserving mapping $\psi^{\nu}:\Omega_{0}\rightarrow\Omega_{0}$ such that 
\begin{equation}
\eta^{*}(\psi^{\nu}\omega)=\eta^{*\nu}(\omega).
\end{equation}

\paragraph{Acknowledgements} This research was supported by the
NSFC grant No.~10901083, the ZiJin Foundation of Nanjing University of Science and Technology and
by the Australian Research Council grants DP0774311 and DP0988738. The first author was also supported by the CSC to work at the University of Adelaide.

\end{document}